\definecolor{colormy}{rgb}{0.8,0.05,0.05}
\definecolor{mycolor}{rgb}{0.25,0.99,0.25}
\tikzstyle directed=[postaction={decorate,decoration={markings,
    mark=at position #1 with {\arrow{>}}}}]
\tikzstyle rdirected=[postaction={decorate,decoration={markings,
    mark=at position #1 with {\arrow{<}}}}]
\newcommand{\Hom}{\mathrm{Hom}}
\newcommand{\Ext}{\mathrm{Ext}}
\newcommand{\h}{\mathrm{ht}}
\def\Z{{\mathbb Z}}
\def\Ind{\mathrm{Ind}}
\theoremstyle{definition}
\newtheorem{thm}{Theorem}[section]
\newtheorem{cor}[thm]{Corollary}
\newtheorem{lem}[thm]{Lemma}
\newtheorem{prop}[thm]{Proposition}
\theoremstyle{definition}
\newtheorem{examplecounter}{Example}
\numberwithin{equation}{section}
\declaretheorem[style=definition,name=Definition,qed=$\blacktriangle$,numberlike=thm]{defn}
\declaretheorem[style=definition,name=Remark,qed=$\blacktriangle$,numberlike=thm]{rem}
\title{$p$-filtrations of dual Weyl modules}
\author{Henning Haahr Andersen}
\address{Three Gorges Mathematical Research Center, China Three Gorges University, Yichang, 443002  Hubei , China}
\email{h.haahr.andersen@gmail.com}
\date{}							
\begin{document}

\begin{abstract}
Let $G$ be a semisimple algebraic group over a field of characteristic $p > 0$. We prove that the dual Weyl modules for $G$ all have $p$-filtrations when $p$ is not too small. Moreover, we give applications of this theorem to $p^n$-filtrations for $n > 1$, to modules containing the Steinberg module as a tensor factor, and to the Donkin conjecture on modules having $p$-filtrations.

\end{abstract}

\maketitle

\section{Introduction}

Let $k$ be an algebraically closed field of characteristic $p>0$ and denote by $G$ a connected semisimple algebraic group over $k$. Pick a maximal torus $T$ in $G$ and a Borel subgroup $B$ containing $T$. We let $X = X(T)$ denote the character group of $T$. Then in the root system $R \subset X$ for $(G, T)$  we choose $R^+$ as the subset of $R$ with $-R^+$ equal to the roots of $B$. The corresponding positive chamber $X^+ \subset X$ we call the set of dominant weights. We denote by $S \subset R^+$ the set of simple roots. If $\alpha$ is any root in $R$ we write $\alpha^\vee$ for the corresponding coroot. 

When $\lambda \in X$ the corresponding dual Weyl module is
$$\nabla(\lambda) = \Ind_B^G \lambda.$$
Here $\lambda$ is considered as a character of $B$ obtained by extending $\lambda : T \rightarrow k^*$ to $B$ by letting it be trivial on the unipotent radical of $B$. Recall that then $\nabla (\lambda) = 0$ unless $\lambda \in X^+$. 

For $\lambda \in X^+$ the module $\nabla(\lambda )$ has a unique simple submodule which we denote $L(\lambda)$. These simple modules constitute up to isomorphisms a complete list of finite dimensional simple $G$-modules. The subset $\{L(\lambda) \mid \lambda \in X_1 \}$ where $X_1 = \{\lambda \in X \mid 0 \leq \langle \lambda, \alpha^\vee \rangle < p \text { for all } \alpha \in S$\} is then called the set of restricted simple modules, and $X_1$ is the set of restricted weights.  

Recall that we have a Frobenius homomorphism $F: G \rightarrow G$. We shall assume that $F$ is chosen as in \cite[Section II.3.1]{RAG}. The kernel of $F$ is denoted $G_1$. This is an infinitesimal normal subgroup scheme of $G$. For any closed subgroup $H \subset G$ we then also have the group scheme $G_1 H$. In particular, we shall need the group scheme $G_1B$ and more generally $G_1P$ where $P$ is a parabolic subgroup containing $B$. 

If $M$ is a $G$-module the Frobenius twist of $M$ is denoted $M^{(1)}$. As a vector space $M^{(1)} = M$ but the action of $G$ is twisted by $F$: if $g \in G, m \in M^{(1)}$ then $g  m = F(g) m$.

A $p$-filtration of a finite dimensional $G$-module $M$ is a filtration with quotients of the form $L(\lambda) \otimes \nabla(\mu)^{(1)}$ where $\lambda \in X_1$ and $\mu \in X^+$. The main aim of this note is to prove that dual Weyl modules have $p$-filtrations. Our main result is

\begin{thm} \label{intro}
Suppose $p \geq (h-2)h$. Then for any $\lambda \in X^+$ the dual Weyl module $\nabla(\lambda)$ has a $p$-filtration.
\end{thm}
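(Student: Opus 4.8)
The plan is to reduce the statement to a question about the group scheme $G_1B$ and then to a cohomological vanishing statement. First I would recall the standard fact that a finite dimensional $G$-module $M$ has a $p$-filtration if and only if, viewed as a $G_1B$-module after suitable adjustment, it is ``compatible'' with the two-step induction $\Ind_{G_1B}^G(-)$; more precisely, the quotients $L(\lambda)\otimes\nabla(\mu)^{(1)}$ are exactly the modules $\Ind_{G_1B}^G\bigl(\widehat{L}(\lambda)\otimes(\mu)^{(1)}\bigr)$ where $\widehat{L}(\lambda)$ is the $G_1B$-module lifting the restricted simple $L(\lambda)$ (its $G_1$-socle is $L(\lambda)$ and $T$ acts by $\lambda$ on that socle). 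So it suffices to show that $\nabla(\lambda)$, restricted to $G_1B$, has a filtration by modules of the form $\widehat{L}(\nu)\otimes(\sigma)^{(1)}$ with $\nu\in X_1$, $\sigma\in X^+$, and that this filtration is carried up to $G$ by $\Ind_{G_1B}^G$ without higher-derived-functor obstruction, i.e. $R^i\Ind_{G_1B}^G$ vanishes on each quotient for $i>0$.

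The second step is to produce the $G_1B$-filtration of $\nabla(\lambda)|_{G_1B}$. Here I would use the transitivity $\Ind_B^G=\Ind_{G_1B}^G\circ\Ind_B^{G_1B}$, so $\nabla(\lambda)=\Ind_{G_1B}^G\bigl(\Ind_B^{G_1B}\lambda\bigr)$ once the higher terms vanish. The module $Z(\lambda):=\Ind_B^{G_1B}\lambda$ — essentially a baby Verma-type / induced $G_1B$-module — is the natural candidate, and one must show it has a filtration with quotients $\widehat{L}(\nu)\otimes(\sigma)^{(1)}$. Since $Z(\lambda)$ restricted to $G_1$ is the induced $G_1$-module $\widehat{Z}(\lambda_0)$ (where $\lambda=\lambda_0+p\lambda_1$), and the $G_1$-composition factors of $\widehat{Z}(\lambda_0)$ are known, the point is to lift the $G_1$-composition series to a $G_1B$-filtration. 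This is where the characteristic bound enters: one needs the $\widehat{L}(\nu)$ appearing to have $T$-weights on their socles that are ``separated'' enough (a linkage/distance argument inside the lowest alcove or using the bound $p\geq(h-2)h$) so that $\Ext^1_{G_1B}$ between successive would-be quotients, after removing the Frobenius twist, is controlled — equivalently, so that the weights $\sigma$ in the twisted part are forced to be dominant. I would expect to invoke a result (possibly from earlier in the paper, or from the Andersen--Jantzen circle of ideas) that $Z(\lambda)$ has such a filtration precisely when $p$ is not too small relative to $h$.

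The third step is the $\Ind_{G_1B}^G$-acyclicity: I must check $R^i\Ind_{G_1B}^G\bigl(\widehat{L}(\nu)\otimes(\sigma)^{(1)}\bigr)=0$ for $i>0$. By the tensor identity this equals $\bigl(R^i\Ind_{G_1B}^G\widehat{L}(\nu)\bigr)\otimes\nabla(\sigma)^{(1)}$, so everything comes down to $R^i\Ind_{G_1B}^G\widehat{L}(\nu)=0$ for $i>0$ and $\nu\in X_1$ — and $R^0$ giving $L(\nu)$. This vanishing is a known consequence of Kempf-type vanishing for $G/G_1B=G/B$ together with the structure of $\widehat{L}(\nu)$ as a $B$-module, again valid under a bound on $p$; I would cite this rather than reprove it. Combining the three steps: $\nabla(\lambda)=\Ind_{G_1B}^G Z(\lambda)$ with $Z(\lambda)$ filtered by $\widehat{L}(\nu)\otimes(\sigma)^{(1)}$, and $\Ind_{G_1B}^G$ exact on this filtration, yields the desired $p$-filtration of $\nabla(\lambda)$ with quotients $L(\nu)\otimes\nabla(\sigma)^{(1)}$.

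The main obstacle is the second step — establishing that the $G_1B$-module $Z(\lambda)=\Ind_B^{G_1B}\lambda$ genuinely admits a filtration by modules $\widehat{L}(\nu)\otimes(\sigma)^{(1)}$ with all $\sigma\in X^+$. The $G_1$-level composition factors are available, but promoting that composition series to a $G_1B$-filtration requires controlling extensions over $G_1B$ and, crucially, showing the occurring twisted weights $\sigma$ are dominant; this is exactly the delicate point that fails for small $p$ and is the reason for the hypothesis $p\geq(h-2)h$. I expect the bound to be used to guarantee that the relevant weights stay inside a region where the $G_1B$-extension groups vanish in the wrong degrees, forcing the filtration to exist with the required shape.
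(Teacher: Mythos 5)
Your frame---write $\nabla(\lambda)=\Ind_{G_1B}^G\bigl(\Ind_B^{G_1B}\lambda\bigr)$, filter the $G_1B$-module $\hat{Z}_1(\lambda)=\Ind_B^{G_1B}\lambda$, and push the filtration up to $G$ using Kempf-type vanishing---is indeed the paper's starting point, but your second step is where the theorem actually lives, and there your proposal has a genuine gap: the filtration you ask for does not exist. The $G_1B$-composition factors of $\hat{Z}_1(\lambda)$ are what they are, and in general many of them are of the form $\hat{L}_1(\mu)=L(\mu^0)\otimes p\mu^1$ with $\mu^1$ \emph{not} dominant; already for $SL_2$ and $\lambda$ restricted non-Steinberg one factor has $\mu^1=-\omega$, and for $Sp_4$, $p=2$, factors such as $\hat{L}_1(-4,0)$ occur, where $\mu^1$ takes the value $-2$ on a simple coroot. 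No bound on $p$ removes these factors, so there is no result "from the Andersen--Jantzen circle of ideas" asserting that $\hat{Z}_1(\lambda)$ is filtered by $\hat{L}_1(\nu)\otimes p\sigma$ with all $\sigma\in X^+$; the entire difficulty of the theorem is precisely how to handle the factors whose twisted part fails to be dominant, and your proposal defers exactly this point.

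The paper's mechanism, which is absent from your sketch, is to keep the full composition series (bad factors included) and to break the induction as $\Ind_{G_1P}^G\circ\Ind_{G_1B}^{G_1P}$, where $P=P_\lambda$ is the parabolic attached to the set $I_\lambda$ of simple roots $\alpha$ with $\langle\mu_m^1,\alpha^\vee\rangle<-1$ for some factor. The weight estimates of Lemma \ref{weight estimates} (coming from the $T$-isomorphism $\hat{Z}_1(\lambda)\cong St\otimes(\lambda-(p-1)\rho)$) show that the existence of such an $\alpha$ forces $\langle\lambda^1+\rho,\alpha^\vee\rangle\leq h-2$, whence all $\mu_j^1$ lie in the bottom alcove for every connected component of $I_\lambda$ once $p\geq(h-2)h_\lambda$; by the strong linkage principle applied to the Levi, $\Ind_B^P(\mu_j^1)$ is then simple or zero. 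This simplicity is what rescues the argument: $\Ind_{G_1B}^{G_1P}$ is only left exact, but a submodule of a simple $G_1P$-quotient is zero or everything, so the induced filtration of $\Ind_{G_1B}^{G_1P}\hat{Z}_1(\lambda)$ has the right shape; and outside $I_\lambda$ every $\mu_j^1$ is $\geq-1$ on simple coroots by construction, so Kempf vanishing makes $\Ind_{G_1P}^G$ exact on the surviving quotients. Note that the hypothesis $p\geq(h-2)h$ thus enters only through the linkage principle on Levi subgroups, not through $\Ext^1_{G_1B}$-control and not through acyclicity of $\Ind_{G_1B}^G$ on $\hat{L}_1(\nu)$ (the latter needs no bound on $p$). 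A secondary slip in your third step: the character $p\sigma$ is not the restriction of a $G$-module, so the tensor identity pulls out $L(\nu)$, not $\nabla(\sigma)^{(1)}$; the correct reduction is $R^i\Ind_{G_1B}^G\bigl(\hat{L}_1(\nu)\otimes p\sigma\bigr)=L(\nu)\otimes\bigl(R^i\Ind_B^G\sigma\bigr)^{(1)}$, whose vanishing for $i>0$ uses exactly the dominance of $\sigma$ that is in question.
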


Our method for proving this result is fairly simple: We exploit the fact that induction from $B$ to $G$ may be done in two or three steps. First we induce $\lambda \in X^+$ from $B$ to $G_1B$. We then take a composition series for this $G_1B$-module and induce it from $G_1B$ to $G$. It turns out that this yields a $p$-filtration of $\nabla(\lambda)$ (at least when $p$ satisfies the given bound). We check this by breaking $\Ind_{G_1B}^G$ up as $\Ind_{G_1P}^G \circ \Ind_{G_1B}^{G_1P}$, where $P$ is a certain parabolic subgroup depending on $\lambda$.

For a given $\lambda \in X^+$ we give bounds on $p$ which in many cases are weaker than the general bound in this theorem. For instance, if $\lambda$ is not close to the walls of $X^+$ then $\nabla(\lambda)$ has a $p$-filtration for all primes $p$, see Theorem \ref{small and large}(2)  below.  If $\lambda$ is close to just one wall, the bound can be relaxed to $p \geq 2(h-2)$. 

Once we have established this we turn (in Section 3) to
 $p^n$-filtrations for arbitrary $n \geq 0$ (defined completely analogously to the $n = 1$ case). We prove that if $M$ has a $p^n$-filtration for some $n \geq 0$ then our result implies that $M$ also has a $p^r$-filtration for all $r \geq n$. In particular, all dual Weyl modules have $p^n$ filtrations for all $n$ (and all $p$ satisfying the assumptions in Theorem \ref{intro}). We also prove that if a $G$-module is divisible by the Steinberg module then it has a $p$-filtration iff it has a good filtration. This in particular applies to the modules in the Steinberg linkage class. We also observe that the equivalence from \cite{A18} between the category of (finite dimensional) $G$-modules and the $n$'th Steinberg component in that category takes modules with a $p^n$-filtration into modules with a $p^{n+1}$-filtration.

In Section 4 we discuss the Donkin conjecture saying that a module has a $p$-filtration if and only if its tensor product with the Steinberg module has a good filtration (i.e. a $p^0$-filtration). We prove for instance that the conjecture holds for all modules  which are divisible by the Steinberg module. Our results in Section 3 also allow us to give a reformulation of the conjecture which is stated purely in terms of modules with $p$-filtrations. More on the Donkin conjecture may be found in \cite{A01},\cite{KN}, \cite{BNPS}.

Finally, in the appendix we take the opportunity to officially withdraw one of the theorems in \cite{A01}. This theorem claimed the main result in the present paper for $p \geq 2(h-1)$,  but unfortunately there is a gap in the proof of a technical lemma on which the theorem was based. The gap was pointed out to me many years ago by S. Donkin (and I withdrew my claimed proof in a lecture at MSRI in 2008). The appendix makes precise exactly which statements in \cite{A01} are affected (fortunately the main results all survive).

The problem treated in Theorem \ref{intro} was first considered by Jantzen in his paper \cite{JCJ80}. He considers the dual case and proves that Weyl modules with ``generic" highest weights have (dual) $p$-filtrations for all $p$. To be ``generic" means to be sufficiently far away from the walls of the dominant chamber. We recover his result in Theorem \ref{small and large} (2), where we have also given precise conditions on the highest weight ensuring it to be ``generic". Then in 2001 my flawed proof appeared. Recently, Parshall and Scott published a paper \cite{PS}, in which they solve the problem for those $p$ which satisfy $p \geq 2h-2$ and for which the Lusztig conjecture on the simple characters for $G$ hold. The latter condition is a serious one: when $G = SL_n$ Williamson \cite{W} has found counter examples to this conjecture for a sequence of $p$'s which grows faster than any polynomial in $n$. Very recently, Bendel, Nakano, Pillen and Sobaje \cite{BNPS2} have found examples of dual Weyl modules for $G$ of type $G_2$ which do not have $2$-filtrations. So our main theorem does not hold in general without restrictions on $p$.

\section{Main result}

\subsection{Conventions and recollections}
For simplicity we shall from now on assume that $R$ is irreducible leaving to the reader the task of generalising to general $R$. We denote the highest short root in $R$ by $\alpha_0$. The Weyl group for $R$ is denoted $W$ and the longest element in $W$ is $w_0$.

In addition to the notation already introduced in the introduction we shall throughout use the following notation (very close to although not completely identical with the conventions in \cite{RAG}).

The $p$-adic components $\lambda^0$ and $\lambda^1$ of a general weight $\lambda \in X$  are defined by the equation
\begin{equation}
\lambda = \lambda^0 + p \lambda^1; \lambda^0 \in X_1, \lambda^1 \in X.
\end{equation}
Note that $\lambda \in X^+$ iff $\lambda^1 \in X^+$.

Recall that $F: G \rightarrow G$ is the Frobenius homomorphism, and its kernel is denoted $G_1$.  We have the corresponding  induction functor

\begin{equation}
 \hat Z_1 = \Ind_B^{G_1B}.
\end{equation}
This is an exact functor. By transitivity of induction we have $\Ind_B^G = \Ind_{G_1B}^G \circ \hat Z_1$. In particular we have $\nabla (\lambda) = \Ind_{G_1B}^G (\hat Z_1(\lambda))$ for all $\lambda$. 

Let $\lambda \in X$. Then $\hat Z_1(\lambda)$ has a unique simple $G_1B$-submodule which we denote $\hat L_1(\lambda)$. Using the above notation we have
\begin{equation}
\hat L_1(\lambda) = L(\lambda^0) \otimes p\lambda^1.
\end{equation}
Here the first factor on the right is the restriction to $G_1B$ of the simple $G$-module $L(\lambda^0)$ and the second factor is the $1$-dimensional $G_1B$-module with trivial $G_1$-action and $B$-action given by $p\lambda^1$.

Note that by the tensor identity and the fact (see e.g. \cite[Proposition I.6.11]{RAG}) that $\Ind_{G_1B}^G \circ -^{(1)} = -^{(1)} \circ \Ind_B^G$ we get from the above
\begin{equation} \label{induced G_1B-simple}
\Ind_{G_1B}^G (\hat L_1(\lambda)) = L(\lambda^0) \otimes \Ind_{G_1B}^G (p\lambda^1) = L(\lambda^0) \otimes \nabla(\lambda^1)^{(1)}.
\end{equation}

The set $\{\hat L_1
(\lambda)_{\lambda \in X}\}$ is up to isomorphisms the set of all finite dimensional simple $G_1B$-modules. The Steinberg module $St = L((p-1)\rho) = \hat L_1((p-1)\rho) = \hat Z_1((p-1)\rho)$ is a special element of this set.

\subsection{Small and large dominant weights}
We begin by showing that if $\lambda \in X^+$ is either ``small" or ``large" with respect to $p$ (see Theorem \ref{small and large} below  for the conventions we use) then $\nabla (\lambda)$ has a $p$-filtration. As mentioned in the introduction the ``large" case was handled by Jantzen, see \cite{JCJ80}. See also \cite {A01}, Lemma 3.4 (for the ``small" case) and Remark 3.7 (for the ``large" case). 

We first  need some weight estimates. They are easy consequences of the well known $T$-structure of the induced module $\hat Z_1(\lambda)$. (Warning: this module is denote $\hat Z_1'(\lambda)$ in \cite{RAG} whereas $\hat Z_1(\lambda)$ there means the coinduced module).  For similar estimates compare \cite{A86}, Section 1.

If $\beta \in R^+$ we set $\h (\beta^\vee) = \langle \rho, \beta^\vee \rangle$ (this is the height of $\beta^\vee$). Note that $\h (\alpha_0^\vee) = h-1$. 

\begin{lem} \label{weight estimates}
Let $\lambda, \mu \in X$. If $\hat L_1(\mu)$ is a $G_1B$-composition factor of $\hat Z_1(\lambda)$ then we have for all $\beta \in R^+$
\begin{equation}
 \langle \lambda^1, \beta^{\vee} \rangle - \h (\beta^\vee) -h +2  \leq  \langle \mu^1, \beta^{\vee} \rangle \leq \langle \lambda^1, \beta^{\vee} \rangle + h - 2.
\end{equation}

\end{lem}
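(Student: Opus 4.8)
The plan is to read off the weight structure of $\hat Z_1(\lambda) = \Ind_B^{G_1 B}\lambda$ directly. As a $B$-module (or $T$-module) $\hat Z_1(\lambda)$ is well understood: it is the analogue, for the infinitesimal group $G_1$, of a Weyl-type module, and its character is $\lambda$ plus a $W$-symmetrized contribution coming from the $p$-th power of the characters of the simple $G_1$-module $L(\lambda^0)$. Concretely, $\hat Z_1(\lambda) \cong L(\lambda^0)\otimes \hat Z_1(p\lambda^1)$ after suitable identifications, and the weights $\nu$ of $\hat Z_1(p\lambda^1)$ satisfy $w_0(p\lambda^1)\le \nu\le p\lambda^1$ in the usual dominance order, so that $\nu = p\lambda^1 - p\sum_{\alpha\in S} c_\alpha \alpha$ for nonnegative integers $c_\alpha$ with $\sum c_\alpha\alpha \le \lambda^1 - w_0\lambda^1$. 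Meanwhile $L(\lambda^0)$ has all its weights between $w_0\lambda^0$ and $\lambda^0$, hence within $\rho$-distance $h-2$ of $\lambda^0$ when paired against any $\beta^\vee$ (since $\langle\lambda^0,\beta^\vee\rangle\le\langle(p-1)\rho + \text{something}\rangle$... more precisely, any weight $\tau$ of a finite-dimensional $G_1$-module with highest weight $\lambda^0\in X_1$ satisfies $|\langle\tau,\beta^\vee\rangle|$-type bounds governed by $\h(\beta^\vee)\le h-1$).

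First I would fix $\beta\in R^+$ and the composition factor $\hat L_1(\mu) = L(\mu^0)\otimes p\mu^1$ of $\hat Z_1(\lambda)$. Every weight of $\hat L_1(\mu)$ is a weight of $\hat Z_1(\lambda)$; in particular $p\mu^1$ itself (the highest weight of the second tensor factor, paired with a lowest weight of $L(\mu^0)$ won't immediately give $p\mu^1$, so more carefully) the weight $\mu^0 + p\mu^1 = \mu$ occurs in $\hat Z_1(\lambda)$, and so does $w_0\mu^0 + p\mu^1$. Using the two-sided weight bound for $\hat Z_1(\lambda)$ — namely every weight $\nu$ of it satisfies $\langle w_0\lambda^1\cdot(\text{scaled}),\beta^\vee\rangle$-type inequalities — I get $\langle \mu,\beta^\vee\rangle \le \langle\lambda,\beta^\vee\rangle$ (from $\mu\le\lambda$, which holds because $\hat Z_1(\lambda)$ has highest weight $\lambda$) and a lower bound from the lowest weight. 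Then I would separate the $p$-parts: writing $\langle\mu,\beta^\vee\rangle = \langle\mu^0,\beta^\vee\rangle + p\langle\mu^1,\beta^\vee\rangle$ and similarly for $\lambda$, and using $0\le\langle\lambda^0,\beta^\vee\rangle, \langle\mu^0,\beta^\vee\rangle$ together with the height bound $\langle\lambda^0,\beta^\vee\rangle \le \h(\beta^\vee) + (h-2)$-style estimate (this is the place where the restrictedness of $\lambda^0,\mu^0$ and $\langle\rho,\beta^\vee\rangle = \h(\beta^\vee)$ enter), I can isolate $p\langle\mu^1,\beta^\vee\rangle$ between $p(\langle\lambda^1,\beta^\vee\rangle - \h(\beta^\vee) - h + 2)$ and $p(\langle\lambda^1,\beta^\vee\rangle + h - 2)$, and divide by $p$.

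The main obstacle I anticipate is the bookkeeping that converts the coarse dominance-order statement $w_0\lambda \preceq \nu \preceq \lambda$ for weights $\nu$ of $\hat Z_1(\lambda)$ into the sharp, $\beta$-by-$\beta$ numerical bounds in the Lemma, keeping careful track of which contributions come from the restricted part ($\lambda^0$, bounded by heights of coroots, i.e.\ by $h-2$ after accounting for the shift by $\rho$) versus the $p$-divisible part ($p\lambda^1$, which shifts everything by multiples of $p$). In particular one must check that the "error" never exceeds $h-2$ on the upper side and $\h(\beta^\vee)+h-2$ on the lower side; the asymmetry reflects that $\langle\lambda^0,\beta^\vee\rangle$ can be as large as roughly $\h(\beta^\vee) + (h-2)$ but is bounded below by $0$, and one uses $\h(\beta^\vee)\le\h(\alpha_0^\vee) = h-1$. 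This is exactly the "$T$-structure of $\hat Z_1(\lambda)$" computation alluded to before the statement, and once it is set up the inequalities follow by elementary manipulation; no deeper input (no Lusztig conjecture, no cohomological vanishing) is needed here.
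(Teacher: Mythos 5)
Your overall plan — reduce to the $T$-structure of $\hat Z_1(\lambda)$, observe that both the highest weight $\mu = \mu^0 + p\mu^1$ and the lowest weight $w_0\mu^0 + p\mu^1$ of $\hat L_1(\mu)$ must be weights of $\hat Z_1(\lambda)$, then separate $p$-adic parts — is the right idea and matches what the paper does. But the crucial numerical step is wrong. You write that from $\mu \leq \lambda$ (dominance order, valid because $\hat Z_1(\lambda)$ has highest weight $\lambda$) you get $\langle\mu,\beta^\vee\rangle \leq \langle\lambda,\beta^\vee\rangle$ for all $\beta \in R^+$. This implication is false: $\lambda - \mu$ being a nonnegative sum of simple roots does not make $\langle\lambda - \mu, \beta^\vee\rangle \geq 0$ for an arbitrary positive coroot $\beta^\vee$. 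Already in type $A_2$, if $\lambda - \mu = \alpha_1$ and $\beta = \alpha_2$ then $\langle\lambda - \mu, \beta^\vee\rangle = -1 < 0$. The same problem afflicts the lower bound you propose to deduce from the lowest weight of $\hat Z_1(\lambda)$. (The preliminary claim $\hat Z_1(\lambda)\cong L(\lambda^0)\otimes\hat Z_1(p\lambda^1)$ is also not a valid isomorphism — the characters do not match — but since you do not actually use it in the estimates, this is a secondary issue.)

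The paper circumvents this exact pitfall by writing, as $T$-modules, $\hat Z_1(\lambda)\cong St\otimes(\lambda-(p-1)\rho)$, so that any weight of $\hat Z_1(\lambda)$ differs from $\lambda-(p-1)\rho$ by a weight $\nu$ of $St$. Because $St$ is a $G$-module its weight set is $W$-stable, so one may $W$-conjugate $\nu$ to a dominant weight $w\nu \leq (p-1)\rho$, and then for \emph{any} $\beta\in R^+$ one gets the uniform two-sided bound $|\langle\nu,\beta^\vee\rangle| = |\langle w\nu, w\beta^\vee\rangle| \leq \langle w\nu,\alpha_0^\vee\rangle \leq (p-1)(h-1)$. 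This $W$-invariance trick is what legitimately converts the coarse weight-lattice information into a bound valid against every coroot; without it, or some equally uniform estimate on the $T$-weights, the inequality in the Lemma does not follow. Your proposal is missing precisely this idea, so as written the argument does not close.
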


\begin{proof}
As a $T$-module $\hat Z_1(\lambda)$ is isomorphic to $St \otimes (\lambda - (p-1)\rho)$. As $St$ is a $G$-module its weight set is stable under the action of $W$ on $X$. Suppose $\nu$ is a weight of $St$. Then there exists a $w \in W$ for which $w(\nu) \in X^+$. As $w(\nu)$ is a weight of $St$ we have in particular $w(\nu) \leq (p-1)\rho$.  Therefore we get for any $\beta \in R^+$
\begin{equation}\label{weights St}
|\langle \nu, \beta^\vee \rangle | = |\langle w(\nu), w(\beta^\vee) \rangle | \leq \langle w(\nu), \alpha_0^\vee \rangle 
 \leq \langle (p-1)\rho, \alpha_0^\vee \rangle = (p-1)(h-1).
\end{equation}

Let now $\hat L_1(\mu)$ be a composition factor of $\hat Z_1(\lambda)$. Then $\mu$ and $\mu' = w_0(\mu^0) + p \mu^1$ are certainly weights of $\hat Z_1(\lambda)$, and hence by the above we can write $\mu = \lambda - (p-1) \rho + \nu$  and  $\mu' = \lambda - (p-1) \rho + \nu'$ for some weights $\nu, \nu'$ of $St$. Using (\refeq{weights St}) we then get for any $\beta \in R^+$
$$ p\langle \mu^1, \beta^\vee \rangle \leq \langle \mu, \beta^\vee \rangle = \langle \lambda, \beta^\vee \rangle -(p-1) \langle \rho, \beta^\vee \rangle + \langle \nu, \beta^\vee \rangle = $$
$$p \langle \lambda^1, \beta^\vee \rangle + \langle\lambda^0 - (p-1) \rho, \beta^\vee \rangle + \langle \nu, \beta^\vee \rangle \leq p \langle \lambda^1, \beta^\vee \rangle + (p-1)(h-1).$$ This gives the second inequality in the lemma.

Arguing in a similar manner we obtain 
$$ p\langle \mu^1, \beta^\vee \rangle \geq \langle \mu', \beta^\vee \rangle = \langle \lambda, \beta^\vee \rangle -(p-1) \langle \rho, \beta^\vee \rangle + \langle \nu', \beta^\vee \rangle \geq $$
$$ p \langle \lambda^1, \beta^\vee \rangle -(p-1) \h (\beta^\vee) - (p-1)(h-1). $$
This proves the first inequality.

\end{proof}

Now let  $\lambda \in X^+$. Consider a $G_1B$-composition series of $\hat Z_1(\lambda)$
$$0 = F_0 \subset F_1 \subset \cdots \subset F_r = \hat Z_1(\lambda).$$
Then $F_j/F_{j-1} = \hat L_1(\mu_j)$ for some $\mu_j \in X,\;  j= 1, 2, \cdots , r$. Applying the induction functor $\Ind_{G_1B}^G$ to this composition series gives a filtration 
$$0 = F'_0 \subset F'_1 \subset \cdots \subset F'_r =  \Ind_{G_1B}^G (\hat Z_1(\lambda)) = \nabla(\lambda)$$
with $F_j' = \Ind_{G_1B}^G (F_j).$

For each $j$ we have an exact sequence
\begin{equation} \label{short seq}
0 \to F'_{j-1} \to F'_j \to L(\mu_j^0) \otimes \nabla(\mu_j^1)^{(1)}
\end{equation}
where we have identified the last term via (\refeq{induced G_1B-simple}). Note in particular that $F'_j = F'_{j-1}$ whenever $\lambda_j \notin X^+$.

We collect this in the following lemma

\begin{lem} \label{weak filtration}
Let $\lambda \in X^+$. Then $\nabla (\lambda)$ has a filtration with quotients being submodules of $L(\mu_j^0) \otimes \nabla (\mu_j^1)^{(1)}$ for some $\mu_j \in X^+$. 
\end{lem}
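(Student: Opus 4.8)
The plan is to read off the desired filtration from the construction carried out in the paragraphs immediately preceding the statement. Since $\hat Z_1(\lambda)$ is finite dimensional it admits a $G_1B$-composition series $0 = F_0 \subset F_1 \subset \cdots \subset F_r = \hat Z_1(\lambda)$, with $F_j/F_{j-1} \cong \hat L_1(\mu_j)$ for suitable $\mu_j \in X$. Applying the functor $\Ind_{G_1B}^G$ produces a chain $0 = F'_0 \subset F'_1 \subset \cdots \subset F'_r$, where $F'_j = \Ind_{G_1B}^G(F_j)$; by transitivity of induction ($\Ind_B^G = \Ind_{G_1B}^G \circ \hat Z_1$) the top term $F'_r$ is exactly $\nabla(\lambda)$. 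So we already have a filtration of $\nabla(\lambda)$, and it remains only to identify its subquotients.

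For this I would use that $\Ind_{G_1B}^G$ is left exact. Applying it to $0 \to F_{j-1} \to F_j \to \hat L_1(\mu_j) \to 0$ gives the exact sequence $0 \to F'_{j-1} \to F'_j \to \Ind_{G_1B}^G(\hat L_1(\mu_j))$ recorded in \eqref{short seq}, and by \eqref{induced G_1B-simple} the right-hand term equals $L(\mu_j^0) \otimes \nabla(\mu_j^1)^{(1)}$. Consequently the $j$-th subquotient $F'_j/F'_{j-1}$ embeds as a $G$-submodule of $L(\mu_j^0) \otimes \nabla(\mu_j^1)^{(1)}$.

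Finally I would clean up the dominance condition. If $\mu_j \notin X^+$ then $\mu_j^1 \notin X^+$, so $\nabla(\mu_j^1) = 0$ and hence $F'_j = F'_{j-1}$; discarding all such redundant steps leaves a filtration of $\nabla(\lambda)$ whose successive quotients are (nonzero) submodules of modules $L(\mu_j^0) \otimes \nabla(\mu_j^1)^{(1)}$ with $\mu_j \in X^+$, which is the assertion.

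I do not expect any real obstacle: the lemma is bookkeeping built on the two-step factorisation of $\Ind_B^G$ through $G_1B$ together with left exactness of $\Ind_{G_1B}^G$. The one point to keep in mind is that left (rather than full) exactness is all that is available, so at this stage the subquotients are only known to be \emph{submodules} of the modules $L(\mu_j^0) \otimes \nabla(\mu_j^1)^{(1)}$; replacing ``submodule of'' by ``equal to'', i.e. turning this weak filtration into an actual $p$-filtration, is the substantive step and is where the hypothesis $p \geq (h-2)h$ and the refinement through $\Ind_{G_1P}^G$ will be needed.
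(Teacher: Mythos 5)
Your proof is correct and is essentially identical to the paper's own argument: take a $G_1B$-composition series of $\hat Z_1(\lambda)$, apply the left-exact functor $\Ind_{G_1B}^G$, identify the top term of \eqref{short seq} via \eqref{induced G_1B-simple}, and observe that the steps with $\mu_j\notin X^+$ contribute nothing since $\nabla(\mu_j^1)=0$. The paper records precisely these observations (including the remark that $F'_j=F'_{j-1}$ when $\mu_j\notin X^+$) and states the lemma as the summary.
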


With additional assumptions on $\lambda$ we can improve on this result. First we record the following proposition handling ``small", respectively ``large" dominant weights. 

\begin{thm} \label{small and large}
Let  $\lambda \in X^+$. Then
\begin{enumerate}
\item (``small" dominant weights) Suppose $\langle \lambda^1, \alpha_0^\vee
 \rangle \leq p-2h+3$. Then $\nabla (\lambda)$ has a $p$-filtration.
\item (``large" dominant weights) Suppose $\lambda \in p(h-2)\rho + X^+$. Then  $\nabla (\lambda)$ has a $p$-filtration.
\end{enumerate}
\end{thm}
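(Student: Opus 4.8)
The plan is to exploit Lemma~\ref{weak filtration}: we already know $\nabla(\lambda)$ has a filtration whose subquotients are \emph{submodules} $N_j \subseteq L(\mu_j^0) \otimes \nabla(\mu_j^1)^{(1)}$ for certain $\mu_j \in X^+$, and the task is to promote these submodules to direct sums of full tensor products $L(\nu)\otimes\nabla(\eta)^{(1)}$ under the stated hypotheses. The key observation is that for a fixed restricted weight $\nu \in X_1$, the functor $L(\nu)\otimes(-)^{(1)}$ is exact and sends a good filtration of a $G$-module to a $p$-filtration; so if we can show each subquotient $N_j$ actually \emph{equals} $L(\mu_j^0)\otimes\nabla(\mu_j^1)^{(1)}$, or more flexibly that each $F_j'$ of the filtration already has a $p$-filtration by downward induction, we are done. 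The mechanism for this is a cohomology vanishing argument: the obstruction to splicing~\eqref{short seq} into an honest extension realizing the full tensor product quotient lies in groups like $\Extii(L(\mu_j^0)\otimes\nabla(\mu_j^1)^{(1)}, F_{j-1}')$, and one wants these to vanish, which by the weight estimates of Lemma~\ref{weight estimates} happens once the relevant $\mu^1$'s are separated enough.

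More concretely, I would argue as follows. In case (1), the hypothesis $\langle\lambda^1,\alpha_0^\vee\rangle \le p-2h+3$ forces, via Lemma~\ref{weight estimates}, that every composition factor $\hat L_1(\mu_j)$ of $\hat Z_1(\lambda)$ has $\langle\mu_j^1,\alpha_0^\vee\rangle \le \langle\lambda^1,\alpha_0^\vee\rangle + h-2 \le p-h+1$, so all the weights $\mu_j^1$ appearing are themselves small dominant weights. The point is then that $p\mu_j^1 + X_1$ stays inside a single lowest-alcove-type region, so that each $G_1B$-module $\hat L_1(\mu_j) = L(\mu_j^0)\otimes p\mu_j^1$ has the property that $\Ind_{G_1B}^G$ applied to it, and to extensions among them, behaves rigidly: the induced filtration of $\nabla(\lambda)$ has no ``room'' for the subquotients $N_j$ to be proper, because a proper submodule $N_j \subsetneq L(\mu_j^0)\otimes\nabla(\mu_j^1)^{(1)}$ would produce, in $\hat Z_1(\lambda)$, a $G_1B$-composition factor whose $\mu^1$-component violates the Lemma~\ref{weight estimates} bounds — one compares the $G$-socle (equivalently the $B$-highest weights) on both sides. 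In case (2), the hypothesis $\lambda \in p(h-2)\rho + X^+$ means $\lambda^1 \in (h-2)\rho + X^+$, so by Lemma~\ref{weight estimates} every $\mu_j^1$ satisfies $\langle\mu_j^1,\beta^\vee\rangle \ge \langle\lambda^1,\beta^\vee\rangle - \h(\beta^\vee) - h + 2 \ge 0$ for all $\beta \in R^+$, i.e.\ every $\mu_j^1$ is dominant, \emph{and} each $\nabla(\mu_j^1)$ is ``far from the walls'' in Jantzen's sense; then one invokes Jantzen's argument from \cite{JCJ80} (or the dual version) that good filtrations interact cleanly with the translation/Frobenius structure, so the induced filtration is automatically a $p$-filtration.

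The uniform tool underlying both cases is a vanishing statement of the shape: if $\mu, \mu' \in X^+$ with $\mu^1$ and $(\mu')^1$ both dominant and lying in a region where $\Extii(\nabla((\mu')^1), \nabla(\mu^1)) = 0$ (which holds when the weights are suitably separated, e.g.\ within a single alcove or sufficiently generic), then for $\nu, \nu' \in X_1$ one also gets enough vanishing among the $G$-modules $L(\nu)\otimes\nabla(\mu^1)^{(1)}$ to conclude that any submodule of such a module appearing as a subquotient of an $\Ind_{G_1B}^G$-image is already a summand of the expected form. I would isolate this as a preparatory sublemma and then feed the weight bounds from Lemma~\ref{weight estimates} into it twice.

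I expect the main obstacle to be the bookkeeping in case (1): one must verify that the bound $p - 2h + 3$ is exactly what is needed so that \emph{all} relevant $p\mu_j^1$ translates stay within the single region where the rigidity holds, and that no $\mu_j^1$ fails to be dominant (forcing $F_j' = F_{j-1}'$, which is harmless but must be tracked). Getting the constants to match $p - 2h + 3$ and $p(h-2)\rho$ precisely, rather than off by a small additive amount, will require care with the $\h(\beta^\vee) \le h-1$ estimates and with whether one uses $\alpha_0^\vee$ or general $\beta^\vee$ in the critical inequality. The case (2) half should be essentially a citation of Jantzen with the weight conditions made explicit, so the real work — and the real risk of a slipped constant — is concentrated in the ``small'' case.
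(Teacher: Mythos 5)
Your set-up (start from Lemma~\ref{weak filtration} and feed in the estimates of Lemma~\ref{weight estimates}) matches the paper, but in both halves you stop short of the step that actually closes the argument, and what you propose in its place does not work. For case (1): from $\langle\mu_j^1,\alpha_0^\vee\rangle\le\langle\lambda^1,\alpha_0^\vee\rangle+h-2\le p-h+1$ you already have $\langle\mu_j^1+\rho,\alpha_0^\vee\rangle\le p$, i.e.\ $\mu_j^1$ lies in the closure of the bottom alcove; the paper's point is simply that the strong linkage principle then forces $\nabla(\mu_j^1)=L(\mu_j^1)$ to be simple, whence $L(\mu_j^0)\otimes\nabla(\mu_j^1)^{(1)}=L(\mu_j^0+p\mu_j^1)$ is simple by Steinberg's tensor product theorem, so every subquotient in Lemma~\ref{weak filtration} is either $0$ or the full tensor product and you are done. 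You never draw this conclusion; instead you assert a ``rigidity'' mechanism --- that a proper submodule $N_j\subsetneq L(\mu_j^0)\otimes\nabla(\mu_j^1)^{(1)}$ would create a $G_1B$-composition factor of $\hat Z_1(\lambda)$ violating Lemma~\ref{weight estimates}. That claim is unsupported and, as stated, false in spirit: the composition factors of any submodule of $L(\mu_j^0)\otimes\nabla(\mu_j^1)^{(1)}$ have highest weights linked to and below $\mu_j^0+p\mu_j^1$, which are perfectly compatible with the estimates of Lemma~\ref{weight estimates}, so no contradiction arises from weights alone. Likewise your ``uniform tool'' about $\Extii(\nabla((\mu')^1),\nabla(\mu^1))=0$ is neither precisely stated nor proved, and $\Ext$-vanishing between neighbouring quotients does not by itself force a subquotient to be the whole module; the only thing that does is simplicity, which is exactly what the alcove bound delivers.

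For case (2) there is a slipped constant and a missing mechanism. The first inequality of Lemma~\ref{weight estimates}, applied to a simple root $\alpha$ (so $\h(\alpha^\vee)=1$) with $\langle\lambda^1,\alpha^\vee\rangle\ge h-2$, gives only $\langle\mu_j^1,\alpha^\vee\rangle\ge -1$, not $\ge 0$: the $\mu_j^1$ need not be dominant (weights with $\langle\mu_j^1,\alpha^\vee\rangle=-1$ do occur and contribute zero quotients), only $\mu_j^1+\rho$ is dominant. The paper's proof then extends \eqref{short seq} to the long exact sequence for $R^\bullet\Ind_{G_1B}^G$, identifies $R^1\Ind_{G_1B}^G(\hat L_1(\mu_j))=L(\mu_j^0)\otimes R^1\Ind_B^G(\mu_j^1)^{(1)}$, and uses Kempf's vanishing theorem (valid precisely because $\mu_j^1+\rho$ is dominant) together with induction on $j$ to conclude $R^1\Ind_{G_1B}^G(F_j)=0$, so that the maps in \eqref{short seq} are surjective and the quotients are the full modules $L(\mu_j^0)\otimes\nabla(\mu_j^1)^{(1)}$ (or $0$). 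Your proposal replaces this with an appeal to Jantzen's ``generic'' result and a vague statement about good filtrations and the Frobenius; that defers rather than supplies the argument, and in particular never identifies the $R^1$-vanishing that is the actual reason the induced filtration is a $p$-filtration.
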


\begin{proof} 
The condition in (1) ensures by Lemma \ref{weight estimates} that all dominant $\mu_j$'s occurring in Lemma \ref{weak filtration} have $\mu_j^1$ in the bottom $p$-alcove in $X^+$, i.e. $\langle \mu_j^1 + \rho, \alpha_0^\vee \rangle \leq p$. This implies by the strong linkage principle \cite{A80} that
$\nabla (\mu_j^1)$ is simple. Hence also  $L(\mu_j^0) \otimes \nabla(\mu_j^1)^{(1)}$ is simple and (1) follows from Lemma \ref{weak filtration}.

To prove (2) we use the fact that the sequence (\refeq{short seq}) is the first part of a long exact sequence
\begin{equation}
0 \to F'_{j-1} \to F'_j \to L(\mu_j^0) \otimes \nabla (\mu_j^1)^{(1)}  \to R^1\Ind_{G_1B}^G (F_{j-1}) \to R^1\Ind_{G_1B}^G (F_{j}) \to R^1\Ind_{G_1B}^G (\hat L_1(\mu_j)) \to \cdots 
\end{equation}
Now as in the proof of Lemma \ref{weak filtration} we get $ R^1\Ind_{G_1B}^G (\hat L_1(\mu_j)) = L(\mu_j^0) \otimes R^1\Ind_B^G(\mu_j^1)^{(1)}$. Our assumption on $\lambda$ ensures (by Lemma \ref{weight estimates}) that all $\mu_j$ have $\langle \mu_j^1, \alpha^\vee \rangle \geq \langle \lambda^1, \alpha^\vee \rangle - h + 1 \geq -1$ for all $\alpha \in S$. Hence by Kempf's vanishing theorem $R^1\Ind_B^G(\mu_j^1) = 0$ for all $j$. By induction on $j$ this means that 
$R^1\Ind_B^G(F_j) = 0$ for all $j$ and hence all sequences in (\refeq{short seq}) are short exact (possibly with last term equal to $0$, namely if there is a simple root $\alpha$ with $\langle \mu_j^1,  \alpha^\vee \rangle = -1$).

\end{proof}

\begin{examplecounter} \label{SL3}
Suppose $G = SL_3$. Then $h = 3$. Let $\lambda $ be a dominant weight and write $\lambda = (a,b)$ to mean $\lambda = a\omega_1 + b\omega_2$ where $\omega_1$ and $\omega_2$ are the two fundamental weights. Write $a = a^0 + p a^1, b = b^0 + p b^1$ with $0 \leq a^0, b^0 < p$. In this notation Theorem \ref{small and large} says that $\nabla (\lambda)$ has a $p$-filtration provided that either $a^1 + b^1 \leq p-3$ or $a, b \geq p$. 

Note that the proposition does not give $p$-filtrations for $\nabla(\lambda)$ for the following (infinite!) set of dominant weights:
$$ \{(a,b) | a \geq p(p-2), 0 \leq b \leq p-1 \} \cup \{(a,b) | 0\leq a \leq p-1, b \geq p(p-2)\}.$$
Using the detailed knowledge of the $G_1B$ composition factors of $\hat Z_1(\lambda)$ in this case it is easy to check via the methods in this section that $p$-filtrations actually exist for all $\lambda \in X^+$ and all $p$. Later we shall improve our results which in the case at hand will also take care of all $p$ as we shall demonstrate in Example 2 (2) below.  Alternatively, see 3.13 in \cite{JCJ80} or \cite{AP}.
\end{examplecounter}

\subsection{The general case}

Let $\lambda \in X^+$ and consider as in the previous subsection a $G_1B$-composition series for $\hat Z_1(\lambda)$. The submodules in this series are again denoted $F_j, \; j= 1, 2, \cdots r$, and the sections are $F_j/F_{j-1} = \hat L_1(\mu_j) = L(\mu_j^0) \otimes p\mu_j^1$. 

Set 
$$ I_\lambda = \{\alpha \in S | \langle \mu_m^1, \alpha^\vee \rangle < -1 \text { for some } m\}.$$
Denote by $R_\lambda = \Z I_\lambda \cap R$ the corresponding root system and by $ P = P_\lambda$ the associated parabolic subgroup. So $P$ is generated by $B$ together with the root subgroups attached to the positive roots in $R_\lambda$.

If $J$ is a connected subset of $S$ we denote by $\alpha_J$ the highest short root of $R_J = \Z J \cap R$ and we set 
$$ h_J = \langle \rho, \alpha_J^\vee \rangle = \h (\alpha_J^\vee).$$
Note that $h_S = h-1$.

Define then
$$ h_\lambda = \max \{h_J | J \text { connected subset of } I_\lambda \} + 1.$$

We shall now consider the induction functor $\Ind_{G_1B}^{G_1P}$. Setting $F_j^{''} = \Ind_{G_1B}^{G_1P}(F_j)$ we argue as we did in establishing (\refeq{short seq}) to see that we have exact sequences

\begin{equation} \label{parabolic sequence}
 0 \to F_{j-1}^{''} \to F_j^{''} \to L(\mu_j^0) \otimes \Ind_B^P(\mu^1_j)^{(1)}.
\end{equation}

\begin{lem} \label{simple P-quotients}
\begin{enumerate}
\item We have $\Ind_B^P (\mu_j^1) \not = 0$ iff $\langle \mu^1_j , \alpha^\vee \rangle \geq 0$ for all $\alpha \in I_\lambda$.
\item Suppose $p \geq (h-2) h_\lambda $. Then $\Ind_B^P (\mu_j^1)$ is a simple $P$-module for each $j$ for which $\langle \mu^1_j , \alpha^\vee \rangle \geq 0$ for all $\alpha \in I_\lambda$. 
\end{enumerate}
\end{lem}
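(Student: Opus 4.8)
First I would address (1). The module $\Ind_B^P(\mu_j^1)$ is the dual Weyl module for the Levi subgroup $L$ of $P$ (viewed as a $P$-module via $P \to L$) with highest weight $\mu_j^1$. By the general theory of induction (applied now inside the reductive group $L$ with its root system $R_\lambda$ and simple roots $I_\lambda$), this module is nonzero precisely when $\mu_j^1$ is dominant for $L$, i.e. $\langle \mu_j^1, \alpha^\vee\rangle \geq 0$ for all $\alpha \in I_\lambda$. So (1) is immediate from \cite[Proposition II.2.6]{RAG} (or its ``parabolic'' version) once one identifies $\Ind_B^P$ with $\Ind_{B\cap L}^L$ composed with inflation.

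For (2), assume $\langle \mu_j^1, \alpha^\vee\rangle \geq 0$ for all $\alpha \in I_\lambda$, so that $\nabla_L(\mu_j^1) := \Ind_B^P(\mu_j^1)$ is the dual Weyl $L$-module $\nabla_L(\mu_j^1)$. The plan is to show that $\mu_j^1$ lies in the bottom $p$-alcove for the root system $R_\lambda$, i.e. $\langle \mu_j^1 + \rho, \alpha_J^\vee\rangle \leq p$ for every connected component $J$ of $I_\lambda$ (equivalently, for every connected subset), and then invoke the strong linkage principle for $L$ \cite{A80} to conclude that $\nabla_L(\mu_j^1)$ is simple. To get the alcove bound I would combine two estimates. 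On one side, Lemma \ref{weight estimates} applied with $\beta = \alpha_J$ gives
\begin{equation*}
\langle \mu_j^1, \alpha_J^\vee\rangle \leq \langle \lambda^1, \alpha_J^\vee\rangle + h - 2.
\end{equation*}
On the other side, the definition of $I_\lambda$ ensures that each $\alpha \in I_\lambda$ actually occurs with $\langle \mu_m^1, \alpha^\vee\rangle < -1$ for some $m$; feeding that $m$ into the \emph{lower} bound of Lemma \ref{weight estimates} with $\beta = \alpha$ yields $\langle \lambda^1, \alpha^\vee\rangle \leq \langle \mu_m^1, \alpha^\vee\rangle + \h(\alpha^\vee) + h - 2 < -1 + 1 + h - 2 = h-2$, i.e. $\langle \lambda^1, \alpha^\vee\rangle \leq h-3$ for all $\alpha \in I_\lambda$. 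Since $\alpha_J^\vee$ is a nonnegative combination of the $\alpha^\vee$, $\alpha \in J$, with coefficient-sum at most $h_J = \h(\alpha_J^\vee)$, we get $\langle \lambda^1, \alpha_J^\vee\rangle \leq (h-3)\,h_J$ — or, being slightly more careful with the relation between $\alpha_J^\vee$ and the simple coroots, a bound of the shape $\langle \lambda^1, \alpha_J^\vee\rangle \leq (h-3)h_J$ or similar. Combining, $\langle \mu_j^1 + \rho, \alpha_J^\vee\rangle = \langle \mu_j^1, \alpha_J^\vee\rangle + h_J \leq (h-3)h_J + h - 2 + h_J \leq (h-2)h_J + h - 2$. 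Since $h_J \leq h_\lambda - 1$ and by hypothesis $p \geq (h-2)h_\lambda = (h-2)(h_\lambda - 1) + (h-2) \geq (h-2)h_J + (h-2)$, and $h-2 \geq h-2$, we obtain $\langle \mu_j^1 + \rho, \alpha_J^\vee\rangle \leq p$, as desired. (The precise numerology needs to be run through carefully to land exactly on the stated bound $p \geq (h-2)h_\lambda$; this bookkeeping, together with getting the sharp relation between $\alpha_J^\vee$ and $\sum_{\alpha\in J}\alpha^\vee$, is where I would be most careful.)

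The main obstacle is the second estimate: controlling $\langle \lambda^1, \alpha_J^\vee\rangle$ for \emph{all} connected subsets $J \subseteq I_\lambda$ simultaneously, using only that each \emph{individual} simple root in $I_\lambda$ is ``activated'' by some (possibly different) composition factor $\mu_m$. One has to be sure that the height-type expansion of $\alpha_J^\vee$ in terms of simple coroots is tight enough that the per-root bound $\langle \lambda^1,\alpha^\vee\rangle \leq h-3$ assembles into the claimed bound on $\langle\lambda^1,\alpha_J^\vee\rangle$, and then that the strong linkage principle for the Levi $L$ — which governs composition factors of $\nabla_L(\mu_j^1)$ by weights linked under the affine Weyl group of $R_\lambda$ — really does force simplicity once $\mu_j^1$ is in the lowest alcove for $R_\lambda$. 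Both ingredients are standard, but matching constants to hit $p \geq (h-2)h_\lambda$ exactly is the delicate part.
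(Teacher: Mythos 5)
Your proposal is correct and follows essentially the same route as the paper: part (1) is the standard nonvanishing criterion for induction to a parabolic, and part (2) uses the lower bound of Lemma \ref{weight estimates} at each simple root of $I_\lambda$ to get $\langle \lambda^1+\rho,\alpha^\vee\rangle \leq h-2$, expands $\alpha_J^\vee$ in simple coroots to bound $\langle \lambda^1+\rho,\alpha_J^\vee\rangle$ by $(h-2)h_J$, then applies the upper bound at $\beta=\alpha_J$ and the strong linkage principle for the Levi. The bookkeeping you were worried about does land exactly on $(h-2)h_J + h-2 = (h-2)(h_J+1)\leq (h-2)h_\lambda \leq p$, which is precisely the paper's computation.
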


\begin{proof}
The first statement in the lemma is a standard fact about induction from $B$ to $P$. 

To check the second statement consider a connected subset $J \subset I_\lambda$.  We claim that $\langle \mu_j^1 + \rho, \alpha_J^\vee \rangle \leq p$. Let namely $\alpha \in J$ and pick $m$ such that $\langle \mu_m, \alpha^\vee \rangle \leq -2$. The first inequality 
in Lemma \ref{weight estimates} gives 
$$\langle \lambda^1 + \rho, \alpha^\vee \rangle  \leq \langle \mu_m^1 + \rho, \alpha^\vee \rangle + h - 1 \leq h-2.$$
Then by the second inequality in Lemma \ref{weight estimates} we see that 
$$\langle \mu_j^1 + \rho, \alpha_J^\vee \rangle \leq \langle \lambda^1 + \rho, \alpha_J^\vee \rangle + h -2 \leq (h-2) \h (\alpha_J ^\vee) + h-2 \leq (h-2) h_\lambda. $$  
The assumption on $p$ thus ensures that the desired inequality holds. 

Now this being true means that $\mu_j^1$ belong to the bottom dominant alcoves for all connected components of $I_\lambda$. By the strong linkage principle \cite{A80} (applied to the corresponding Levi subgroups) this implies that $\Ind_B^P (\mu_j^1)$ is simple.
\end{proof}

\begin{thm} \label{main}
Let $\lambda \in X^+$. If $p \geq (h-2) h_\lambda$ then  $\nabla (\lambda)$ has a $p$-filtration. 
\end{thm}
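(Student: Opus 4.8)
The plan is to combine the parabolic induction step from Lemma \ref{simple P-quotients} with a second induction step $\Ind_{G_1P}^G$, exactly as indicated in the introduction. First I would observe that since $P \supset B$, induction in three stages gives $\Ind_{G_1B}^G = \Ind_{G_1P}^G \circ \Ind_{G_1B}^{G_1P}$, so that $\nabla(\lambda) = \Ind_{G_1P}^G(\Ind_{G_1B}^{G_1P}(\hat Z_1(\lambda)))$. Applying $\Ind_{G_1B}^{G_1P}$ to the $G_1B$-composition series $F_0 \subset F_1 \subset \cdots \subset F_r = \hat Z_1(\lambda)$ yields the filtration $F_j'' = \Ind_{G_1B}^{G_1P}(F_j)$ of the $G_1P$-module $\Ind_{G_1B}^{G_1P}(\hat Z_1(\lambda))$, with sections controlled by the exact sequences \eqref{parabolic sequence}. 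Under the hypothesis $p \geq (h-2)h_\lambda$, Lemma \ref{simple P-quotients} tells us each nonzero cokernel $L(\mu_j^0) \otimes \Ind_B^P(\mu_j^1)^{(1)}$ is of the form $L(\mu_j^0) \otimes M_j^{(1)}$ with $M_j = \Ind_B^P(\mu_j^1)$ a simple $P$-module — in particular a module with good $P$-filtration — and when it is zero the map $F_{j-1}'' \to F_j''$ is an isomorphism.

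Next I would promote \eqref{parabolic sequence} to short exact sequences. The sequence \eqref{parabolic sequence} is the beginning of the long exact sequence of derived functors $R^\bullet \Ind_{G_1B}^{G_1P}$, and as in the proof of Theorem \ref{small and large}(2) the next term is $L(\mu_j^0) \otimes R^1\Ind_B^P(\mu_j^1)^{(1)}$. By the definition of $I_\lambda$, for $\alpha \in S \setminus I_\lambda$ every composition factor $\hat L_1(\mu_m)$ of $\hat Z_1(\lambda)$ has $\langle \mu_m^1, \alpha^\vee \rangle \geq -1$; but for the parabolic $P$ attached to $I_\lambda$ only the coroots $\alpha^\vee$ with $\alpha \in I_\lambda$ are relevant, and for those we have just seen (in the proof of Lemma \ref{simple P-quotients}) that $\mu_j^1$ lies in the bottom alcove for $R_J$, so in particular $\langle \mu_j^1, \alpha^\vee\rangle \geq -1$ for all $\alpha \in I_\lambda$ — more precisely this needs a short separate check, but it follows from the same weight estimates since $\langle \mu_j^1 + \rho, \alpha^\vee\rangle \geq \langle \lambda^1 + \rho, \alpha^\vee\rangle - h + 2$ and one argues as in Theorem \ref{small and large}(2). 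Hence Kempf vanishing for the Levi of $P$ gives $R^1\Ind_B^P(\mu_j^1) = 0$, and by induction on $j$ one gets $R^1\Ind_{G_1B}^{G_1P}(F_j) = 0$ for all $j$, so \eqref{parabolic sequence} becomes a short exact sequence. Therefore $\Ind_{G_1B}^{G_1P}(\hat Z_1(\lambda))$ has a filtration by $G_1P$-modules of the form $L(\mu_j^0) \otimes M_j^{(1)}$ with $M_j$ simple $P$-modules (or with the $j$-th step trivial).

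Now I would apply the remaining induction $\Ind_{G_1P}^G$. By the tensor identity and the compatibility $\Ind_{G_1P}^G \circ -^{(1)} = -^{(1)} \circ \Ind_P^G$ (the analogue of \cite[Proposition I.6.11]{RAG} for $P$ in place of $B$), we have $\Ind_{G_1P}^G(L(\mu_j^0) \otimes M_j^{(1)}) = L(\mu_j^0) \otimes (\Ind_P^G M_j)^{(1)}$. Since $M_j$ is a simple $P$-module, hence has a good $P$-filtration, the module $\Ind_P^G M_j$ has a good $G$-filtration (by the standard fact that $\Ind_P^G$ of a module with good $P$-filtration has a good $G$-filtration, together with $R^{>0}\Ind_P^G$ vanishing on such modules, see \cite[Proposition II.4.13]{RAG} and its parabolic version). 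Applying $\Ind_{G_1P}^G$ to the filtration of $\Ind_{G_1B}^{G_1P}(\hat Z_1(\lambda))$ and using the vanishing $R^1\Ind_{G_1P}^G(L(\mu_j^0)\otimes M_j^{(1)}) = L(\mu_j^0) \otimes (R^1\Ind_P^G M_j)^{(1)} = 0$ (again as the $M_j$ have good $P$-filtrations) plus induction on $j$, all the resulting sequences stay short exact. This yields a filtration of $\nabla(\lambda) = \Ind_{G_1P}^G(\Ind_{G_1B}^{G_1P}(\hat Z_1(\lambda)))$ whose nonzero sections are $L(\mu_j^0) \otimes (\Ind_P^G M_j)^{(1)}$; refining each $(\Ind_P^G M_j)^{(1)}$ along its good $G$-filtration (whose sections are $\nabla(\nu)^{(1)}$) and tensoring by $L(\mu_j^0)$, we obtain precisely a $p$-filtration of $\nabla(\lambda)$.

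The main obstacle I anticipate is the double bookkeeping of higher derived functors: one must check $R^1$-vanishing twice — first for $\Ind_{G_1B}^{G_1P}$ (via Kempf vanishing for the Levi, which forces the careful weight estimate $\langle \mu_j^1, \alpha^\vee\rangle \geq -1$ for $\alpha \in I_\lambda$, not merely for $\alpha \notin I_\lambda$) and then for $\Ind_{G_1P}^G$ (via the good-filtration property of the simple $P$-modules $M_j$) — and one must be careful that the inductions-on-$j$ propagate correctly so that the whole chain of sequences splits into short exact pieces. The bound $p \geq (h-2)h_\lambda$ enters exactly at the point where Lemma \ref{simple P-quotients}(2) guarantees that $\Ind_B^P(\mu_j^1)$ is simple; everything else is functoriality of induction and standard good-filtration facts.
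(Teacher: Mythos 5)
Your overall architecture is the paper's: apply $\Ind_{G_1B}^{G_1P}$ to the $G_1B$-composition series, invoke Lemma~\ref{simple P-quotients}(2) to get simple sections, then apply $\Ind_{G_1P}^G$. But there are two concrete gaps in the way you justify exactness.

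First, in the step $\Ind_{G_1B}^{G_1P}$, you try to prove $R^1\Ind_B^P(\mu_j^1)=0$ for \emph{all} $j$ by asserting $\langle \mu_j^1,\alpha^\vee\rangle \geq -1$ for all $\alpha \in I_\lambda$. This is false by the very definition of $I_\lambda$: for each $\alpha \in I_\lambda$ there exists an $m$ with $\langle \mu_m^1, \alpha^\vee\rangle < -1$, and the weight estimate in Lemma~\ref{weight estimates} only gives $\langle \mu_j^1,\alpha^\vee\rangle \geq \langle\lambda^1,\alpha^\vee\rangle - h + 1$, which is nowhere near $-1$ in general. The paper never needs this $R^1$-vanishing: since the left-exactness of (\ref{parabolic sequence}) exhibits $F_j''/F_{j-1}''$ as a submodule of $L(\mu_j^0)\otimes\Ind_B^P(\mu_j^1)^{(1)}$, and the latter is either $0$ or simple (by Lemma~\ref{simple P-quotients}), the quotient is automatically either $0$ or equal to that simple module — no derived functor input required. (Note moreover that for precisely the $m$ where $\langle\mu_m^1,\alpha^\vee\rangle<-1$ the target $\Ind_B^P(\mu_m^1)$ is $0$ anyway, so those sequences are trivially short exact.) You actually observe the simplicity yourself in the first paragraph, so the fix is just to drop the incorrect $R^1$ argument and use the simplicity dichotomy directly.

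Second, in the step $\Ind_{G_1P}^G$, you appeal to a ``standard fact'' that $R^{>0}\Ind_P^G$ vanishes on any module with a good $P$-filtration. As stated this is not correct: $\Ind_B^P(\nu)$ with $\nu$ dominant for the Levi of $P$ but not for $G$ can have nonvanishing $R^1\Ind_P^G$ (consider $P=B$, where every weight trivially has a ``good $P$-filtration''). The actual vanishing here requires $G$-dominance of $\mu_j^1+\rho$, and the paper checks this explicitly: for the relevant $j$ (those with nonzero quotient) one has $\langle\mu_j^1,\alpha^\vee\rangle\geq 0$ for $\alpha\in I_\lambda$ by Lemma~\ref{simple P-quotients}(1), and $\langle\mu_j^1,\alpha^\vee\rangle\geq -1$ for $\alpha\in S\setminus I_\lambda$ by the definition of $I_\lambda$; hence $\mu_j^1+\rho\in X^+$, Kempf gives $R^1\Ind_B^G(\mu_j^1)=0$, and $R^1\Ind_P^G(\Ind_B^P(\mu_j^1))$ injects into it. Also note that you do not need to ``refine along a good $G$-filtration'': $\Ind_P^G(\Ind_B^P(\mu_j^1))=\Ind_B^G(\mu_j^1)=\nabla(\mu_j^1)$ by transitivity, so the section is already of the required form.
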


\begin{proof}
Let $\lambda \in X^+$. We shall use the notation from above.  According to Lemma \ref{simple P-quotients} the module $\Ind_{G_1B}^{G_1P} \hat Z_1(\lambda)$ has a $G_1P$-filtration $0 = F_0^{''} \subset F_1^{''} \subset \cdots \subset  F_r^{''} = \Ind_{G_1B}^{G_1P} \hat Z_1(\lambda)$ where the quotient $F_j^{''}/ F_{j-1}^{''}$ is either $0$ or a submodule of  $L(\mu_j^0) \otimes \Ind_B^P(\mu_j^1)^{(1)}$. By (2) in Lemma \ref{simple P-quotients} the latter module is a simple $G_1P$-module. Hence we have either $F_j^{''} = F_{j-1}^{''}$ or  a short exact sequence
$$ 0 \to F_{j-1}^{''} \to F_j^{''} \to L(\mu_j^0) \otimes \Ind_B^P(\mu_j^1)^{(1)} \to 0.$$
We now apply the functor $\Ind_{G_1P}^G$ to this filtration. Note that this functor applied to $\Ind_{G_1B}^{G_1P} \hat Z_1(\lambda)$ gives $\nabla (\lambda)$ because by transitivity of induction the composite $\Ind_{G_1P}^G \circ \Ind_{G_1B}^{G_1P} \circ \Ind_B^{G_1B}$ equals  $\Ind_B^G$. Moreover, $\Ind_{G_1P}^G(L(\mu_j^0) \otimes \Ind_B^P(\mu_j^1)^{(1)}) = L(\mu_j^0) \otimes \nabla(\mu_j^1)^{(1)}$. Finally, observe that since $R^1\Ind_P^G (\Ind_B^P(\mu_j)) = R^1 \Ind_B^G(\mu_j^1) = 0$ for all $j$'s for which the quotient $F_j^{''}/F_{j-1}^{''}$ is non-zero (because for such $j$ we have $\mu_j^1 + \rho $ is dominant) the resulting sequence of $G$-submodules is a $p$-filtration of $\nabla (\lambda)$. 
\end{proof}

Noting that $h_\lambda \leq h$ for all $\lambda$ (with equality only if $I_\lambda = S$) we obtain from this theorem the result stated in Theorem \ref{intro} in the introduction, namely

\begin{cor} \label{main cor}
Suppose $p \geq (h-2)h$. Then $\nabla(\lambda)$ has a $p$-filtration for all $\lambda \in X^+$.
\end{cor}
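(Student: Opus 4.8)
The plan is to derive this directly from Theorem \ref{main}. That theorem produces a $p$-filtration of $\nabla(\lambda)$ as soon as $p \geq (h-2)h_\lambda$, so it suffices to establish the uniform bound $h_\lambda \leq h$ for every $\lambda \in X^+$; once this is known, the hypothesis $p \geq (h-2)h$ forces $p \geq (h-2)h_\lambda$ and Theorem \ref{main} applies verbatim.

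To prove $h_\lambda \leq h$, recall that by definition $h_\lambda = \max\{h_J \mid J \text{ connected subset of } I_\lambda\} + 1$, with $h_J = \langle \rho, \alpha_J^\vee \rangle$. Since $I_\lambda \subseteq S$, it is enough to show $\langle \rho, \alpha_J^\vee \rangle \leq h-1$ for every connected $J \subseteq S$. Here $\alpha_J$ is the highest short root of $R_J = \Z J \cap R$, hence $\alpha_J \in R^+$ and $\alpha_J^\vee$ is a positive coroot of $R$ (the coroot of a root is intrinsic to the ambient inner product, so it makes no difference whether we compute it in $R_J$ or in $R$). I would then invoke the standard fact that the coroot $\alpha_0^\vee$ of the highest short root $\alpha_0$ of $R$ is exactly the highest root of the dual root system $R^\vee$. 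Consequently $\alpha_0^\vee - \beta^\vee$ is a non-negative integral combination of simple coroots for every $\beta \in R^+$, and pairing with the dominant weight $\rho$ (which satisfies $\langle \rho, \gamma^\vee\rangle \geq 0$ for all $\gamma \in S$) gives $\langle \rho, \beta^\vee\rangle \leq \langle \rho, \alpha_0^\vee\rangle = h-1$. Taking $\beta = \alpha_J$ yields $h_J \leq h-1$, whence $h_\lambda \leq h$.

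For the parenthetical claim about equality: $h_\lambda = h$ forces $h_J = h-1$, i.e. $\langle \rho, \alpha_J^\vee\rangle = \langle \rho, \alpha_0^\vee\rangle$, for some connected $J \subseteq I_\lambda$; since $\rho$ is strictly dominant on positive coroots this means $\alpha_J^\vee = \alpha_0^\vee$, hence $\alpha_J = \alpha_0$, and for irreducible $R$ the highest short root lies in no proper Levi sub-root-system, so $J = S$ and therefore $I_\lambda = S$.

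I do not expect any real obstacle here: the entire content of the corollary is the elementary combinatorial inequality $h_\lambda \leq h$, which reduces to the observation that $\alpha_0^\vee$ is the highest coroot. The only point requiring a moment's care is the passage to the Levi sub-root-system $R_J$ and the accompanying bookkeeping of short versus long roots; but since we only need an \emph{upper} bound on $\langle \rho, \alpha_J^\vee\rangle$, and every positive coroot of $R$ has $\rho$-height at most $h-1$, this distinction plays no role.
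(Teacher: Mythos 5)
Your proposal is correct and follows exactly the paper's route: the corollary is deduced from Theorem \ref{main} by observing $h_\lambda \leq h$ for all $\lambda$ (with equality only when $I_\lambda = S$), which is precisely the one-line argument the paper gives. Your elaboration of the inequality via the fact that $\alpha_0^\vee$ is the highest coroot, so $\langle \rho, \beta^\vee\rangle \leq h-1$ for every $\beta \in R^+$, is a correct filling-in of the detail the paper leaves implicit.
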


\begin{rem} \label {1 wall}
Suppose $\lambda$ is a dominant weight for which all connected components of $I_\lambda$ consist of just one element. By the definition of $h_\lambda$ this is equivalent to  $h_\lambda = 2$. 
Hence Theorem \ref{main} says that $\nabla(\lambda)$ has a $p$-filtration for all $p \geq (h-2)2$. 

In particular, if $\lambda$ is close to just one wall of the dominant chamber, or more precisely if $I_\lambda = \{\alpha\}$ for some $\alpha \in S$, then $ \nabla(\lambda)$ has a $p$-filtration for all such $p$. Note that by Lemma \ref{weight estimates} we have $I_\lambda \subset \{\alpha\}$ if $\langle \lambda^1, \beta^\vee \rangle \geq h-2$ for all $\beta \in S\setminus \{\alpha\}$. So setting 
$$ X(\leq 1) = \{\lambda \in X^+ | \text {there exists at most one $\alpha \in S$ with } \langle \lambda^1, \alpha^\vee \rangle < h-2\}.$$
we get
\begin{equation} \label{1 wall eq} 
\nabla (\lambda) \text { has a $p$-filtration for all } \lambda \in X(\leq 1) \text { whenever } p \geq 2(h-2).
\end{equation}
\end{rem}

\begin{examplecounter} \begin{enumerate}
\item
Suppose $G$ has rank $2$. Then $X^+\setminus X(\leq 1)$ is contained in the {\it finite} set 
$$Y(\leq 1) =\{\lambda \in X^+ | \langle \lambda^1, \alpha^\vee \rangle < h-2, \alpha \in S\}.$$ 
So for all dominant weights except possibly finitely many the dual Weyl modules for $G$ all have $p$-filtrations for $p \geq 2(h-2)$.

\item
Let us return to the group $G =SL_3$ considered in Example \ref{SL3}. In this case $h = 3$ so by Corollary \ref{main cor} all dual Weyl modules for $SL_3$ have $p$-filtrations when $p \geq 3$. 

Note that for $SL_3$ the finite set $Y(\leq 1)$ consists of just the set of restricted weights. So by the observation in (1) above we get that $\nabla (\lambda)$ has a $p$-filtration for all $p$ except possibly for $\lambda \in X_1, p=2$. However, it is easy to check that for $p=2$ the dual Weyl modules corresponding to the $4$  restricted weights are all simple and hence trivially have a $2$-filtration.

So we have reproved (the known result mentioned in Example \ref{SL3} saying) that all dual Weyl modules for $SL_3$ have $p$-filtrations for all $p$.

\item Let now $G = Sp_4$ . The corresponding root system is $B_2$, which has $1$ short simple root $\alpha_1$ and $1$ long simple root $\alpha_2$. It has $h = 4$ and Corollary \ref{main cor} thus gives $p$-filtrations for all $\nabla(\lambda)$ when $p > 7$. If we limit ourselves to $\lambda \in X(\leq 1)$ we can improve this to $p \geq 5$ by applying instead (\refeq{1 wall eq}).

The finite set $Y(\leq 1)$ is in this case equal to $\{\lambda \in X^+ | \langle \lambda, \alpha_i^\vee \rangle < 2p, i= 1, 2\}$. 
When $p = 7$ direct inspection shows that if $L(\mu)$ is a composition factor of some $\nabla(\lambda)$ with $\lambda \in Y(\leq 1)$ then $\mu^1$ belongs to the lowest alcove in $X^+$. Hence all dual Weyl modules for $Sp_2$ have $7$-filtrations.

The same argument does not work for $p=5$. In this case our methods above give $5$-filtrations for all dominant weights except the $25$ weights belonging to $5 \rho + X_1$. We can handle each $\lambda \in 5\rho + X_1$ by a careful inspection of the composition factors of $\hat Z_1(\lambda)$. Suppose $\lambda$ is $5$-regular, i.e. belongs to the interior of an alcove. Then $\hat Z_1(\lambda)$ has $20$-composition factors (this is true for all $p \geq 5$ and all $p$-regular dominant $\lambda$, see \cite{JCJ77}). If $\lambda$ belongs to one of the top two alcoves in $5\rho + X_1$ a close inspection of the patterns for type $B_2$ on  p. 456 in \cite{JCJ77} reveals, that all composition factors $\hat L_1(\mu)$ of $\hat Z_1(\lambda)$ have $\mu^1 + \rho \in X^+$. In this case the arguments used in the proof of Theorem \ref{small and large} (2) produce a $5$-filtration for $\nabla(\lambda)$. If $\lambda$ belongs to one of the two lower alcoves in $5\rho + X_1$ then there is exactly $1$ composition factor $\hat L_1(\mu)$ of $\hat Z_1(\lambda)$ with $\mu^1 + \rho \notin X^+$. Let $\mu_+ = \mu^0 + 5 \mu_+^1$ with $\{\mu_+^1\} = W \cdot \mu^1 \cap X^+$. Then by inspection we observe that $L(\mu_+)$ is not a composition factor of $\nabla (\lambda)$. The arguments in Section 2 then show that $\Ind_B^G$ also in this case takes a composition series of $\hat Z_1(\lambda)$ into a $5$-filtration of $\nabla (\lambda)$. Finally, if $\lambda$ is not $5$-regular  $\hat Z_1(\lambda)$ has much fewer composition factors (at most $10$) and the same arguments work.  Hence in fact  all dual Weyl modules for $Sp_4$ have $5$-filtrations.

When $p$ is either $2$ or $3$ our best result is Proposition \ref{small and large} (2), which however leaves us with infinitely many weights $\lambda$ for which the question of  whether $\nabla(\lambda) $ has a $p$-filtration is open. A tedious check of the composition patterns of each of the corresponding $\hat Z_1(\lambda)$ reveals that $\nabla(\lambda)$ does have a $p$-filtration in all these cases. When $p = 2$ an extra subtlety occurs: Some of the $\hat Z_1(\lambda)$'s have composition factors with multiplicities $> 1$. For instance, $\hat Z_1(0)$ has the following $8$ composition factors (we give their highest weights in terms of their coordinates with respect to the fundamental weights) 
$$\hat L_1(0,0), \hat L_1(-2,1), \hat L_1(2,-2), \hat L_1(0,-1), \hat L_1(-2,0), \hat L_1(0,-2), \hat L_1(-4,0),\hat L_1(-2,2).$$ Of these $\hat L_1(-2,0)$ and $\hat L_1(0,-2)$ occur with multiplicity $2$. Nevertheless one checks that $\Ind_{G_1B}^G$ still takes a composition series for $\hat Z_1(\lambda)$ into a $2$-filtration for $\nabla (\lambda)$ for all $ \lambda$.
\end{enumerate}
\end{examplecounter}

\section{$p^n$-filtrations}

Let $n \geq 0$ and define $X_n$ to be
$$ X_n = \{\lambda \in X^+ | \langle \lambda, \alpha^\vee \rangle < p^n \text { for all } \alpha \in S\}.$$
The elements of $X_n$ are called the $p^n$-restricted weights. We now write for any $\mu \in X$
$$ \mu = \mu^0 + p^n \mu^1$$
with $\mu^0 \in X_n$ and $\mu^1 \in X$. Here we are in conflict with the notation used in previous sections but we will make sure not to mix it up with the $n=1$ notation considered so far.

If $M$ is a $G$-module we denote by $M^{(n)}$ the twist by $F^n$ of $M$. We have the inductive formula $M^{(n)} = (M^{(n-1)} )^{(1)}$. Note that $(M^{(n)})^{(m)} = M^{(n+m)}$ for all $m \geq 0$. 

\subsection{Higher filtrations}

We say in analogy with the $n=1$ case that a $G$-module $M$ has a $p^n$-filtration if it has a series of $G$-submodules
$$ 0 = F_0 \subset F_1 \subset \cdots \subset F_r = M$$
with $F_j/F_{j-1} = L(\mu_j^0) \otimes \nabla(\mu_j^1)^{(n)}$ for some $\mu_j \in X^+$. 

Note that a $1$-filtration is the same as a good filtration.

\begin{prop}\label{p^n-filt}
Suppose $p \geq h(h-2)$. If $M$ has a $p^n$-filtration then $M$ has also a $p^m$-filtration for all $m \geq n$. 
\end{prop}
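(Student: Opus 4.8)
The plan is to reduce everything to the case $m=n+1$, since the general case then follows by an obvious induction on $m$. So suppose $M$ has a $p^n$-filtration; it suffices to show $M$ has a $p^{n+1}$-filtration. Since having a $p^{n+1}$-filtration is preserved under extensions (this is a standard argument: given a short exact sequence $0 \to M' \to M \to M'' \to 0$ with $M'$ and $M''$ admitting $p^{n+1}$-filtrations, one splices the two filtrations, using that $\Ext^1$ into the first filtration layers of $M''$ causes no obstruction because the relevant terms are $L(\nu^0)\otimes\nabla(\nu^1)^{(n+1)}$ and good-type modules behave well), it is enough to treat a single layer, i.e. to show that $L(\mu^0)\otimes\nabla(\mu^1)^{(n)}$ has a $p^{n+1}$-filtration for $\mu \in X^+$, where here $\mu^0\in X_n$, $\mu^1\in X^+$.

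The key idea is to apply the Frobenius twist $F^{n}$ to Theorem~\ref{main} (really Corollary~\ref{main cor}). By that theorem, since $p\geq h(h-2)$, the dual Weyl module $\nabla(\mu^1)$ has a $p$-filtration as a $G$-module, say with layers $L(\sigma_i^0)\otimes\nabla(\sigma_i^1)^{(1)}$ where $\sigma_i^0\in X_1$ and $\sigma_i^1\in X^+$ (here $\sigma_i = \sigma_i^0 + p\sigma_i^1$ in the $n=1$ sense). Applying the exact functor $-^{(n)}$, we get that $\nabla(\mu^1)^{(n)}$ has a filtration with layers $L(\sigma_i^0)^{(n)}\otimes\nabla(\sigma_i^1)^{(n+1)}$, using $(M^{(1)})^{(n)} = M^{(n+1)}$ and that twisting is exact and commutes with tensor products. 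Now tensor the whole filtration with the fixed module $L(\mu^0)$ (tensoring with a fixed module is exact, so it carries filtrations to filtrations), obtaining a filtration of $L(\mu^0)\otimes\nabla(\mu^1)^{(n)}$ with layers
$$ L(\mu^0)\otimes L(\sigma_i^0)^{(n)}\otimes\nabla(\sigma_i^1)^{(n+1)}. $$

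It remains to recognise each such layer as a module admitting a $p^{n+1}$-filtration. The point is that $\mu^0\in X_n$ and $L(\sigma_i^0)^{(n)}$ with $\sigma_i^0\in X_1$, so by Steinberg's tensor product theorem $L(\mu^0)\otimes L(\sigma_i^0)^{(n)} = L(\mu^0 + p^n\sigma_i^0)$ is a \emph{simple} $G$-module; moreover $\mu^0 + p^n\sigma_i^0$ lies in $X_{n+1}$ precisely because $\langle\mu^0,\alpha^\vee\rangle < p^n$ and $0\leq\langle\sigma_i^0,\alpha^\vee\rangle < p$ force $\langle\mu^0 + p^n\sigma_i^0,\alpha^\vee\rangle < p^{n+1}$ for all $\alpha\in S$. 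Thus each layer is of the form $L(\tau^0)\otimes\nabla(\sigma_i^1)^{(n+1)}$ with $\tau^0\in X_{n+1}$ and $\sigma_i^1\in X^+$, which is exactly a $p^{n+1}$-filtration layer. So $L(\mu^0)\otimes\nabla(\mu^1)^{(n)}$ has a $p^{n+1}$-filtration, completing the single-layer case and hence, by the extension-closure remark, the proof.

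The main obstacle I expect is the extension-closure step: one must argue carefully that the class of modules with a $p^{n+1}$-filtration is closed under extensions. This is genuinely the same kind of cohomological bookkeeping used implicitly in Section~2 — the splicing of two $p^{n+1}$-filtrations across a short exact sequence works because the first layer of the sub-filtration is of the form $L(\nu^0)\otimes\nabla(\nu^1)^{(n+1)}$, and $\Ext^1_G$ of an arbitrary $p^{n+1}$-layer into such a module is controlled (indeed one reduces via the tensor identity and the twist to $\Ext^1$ involving dual Weyl modules, which vanishes in the good-filtration setting). Everything else — the Steinberg tensor product identification, the alcove/restrictedness bookkeeping, and the exactness of twisting and of tensoring with a fixed module — is routine.
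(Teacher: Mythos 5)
Your argument is correct and is essentially the paper's own proof: reduce to a single layer $L(\mu^0)\otimes\nabla(\mu^1)^{(n)}$, apply Corollary~\ref{main cor} to $\nabla(\mu^1)$, twist by $F^n$, tensor with $L(\mu^0)$, and finish with Steinberg's tensor product theorem plus the observation that $\mu^0+p^n\sigma_i^0\in X_{n+1}$. One remark: the extension-closure step you flag as the main obstacle needs no cohomological input at all --- given a filtration of $M$ whose layers admit $p^{n+1}$-filtrations, one simply refines it by taking preimages of the layers' filtrations, so the $\Ext^1$-vanishing you invoke (which in fact does not hold in general for such layers) is never needed.
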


\begin{proof}
It is clearly enough to check the proposition in the case where $m = n+1$ and $M = L(\lambda^0) \otimes \nabla (\lambda^1)^{(n)}$ for some $\lambda \in X^+$. Now by Corollary \ref{main cor} the dual Weyl module $\nabla(\lambda^1)$ has a $p$-filtration, i.e. a filtration with quotients of the form  $L(\mu) \otimes \nabla (\nu)^{(1)}$ with $\mu \in X_1$ and $\nu \in X^+$. Then $M$ has a filtration with quotients $L(\mu^0) \otimes L(\mu)^{(n)} \otimes \nabla(\nu)^{(n+1)}$. By Steinberg's tensor product theorem $L(\mu^0) \otimes L(\mu)^{(n)} = L(\mu^0 + p^n \mu)$ and since $\mu^0 + p^n \mu \in X_{n+1}$ we have thus obtained a $p^{n+1}$-filtration of $M$.
\end{proof}

 \begin{rem}
\begin{enumerate}
\item The case $n= 0, \, m= 1$ in this proposition is equivalent to our main result, Theorem \ref{intro}.
\item
Note that if $m > 0$ not all modules with $p^{n+m}$-filtrations have $p^n$-filtrations. Examples are for instance all $L(\lambda)$ where $\lambda \in X_{n+m} \setminus X_n$ with $L(\lambda) \neq \nabla(\lambda)$.
\end{enumerate}
\end{rem}

\begin{cor} \label{main cor-n}
If $p \geq (h-2)h$ then all dual Weyl modules have $p^n$-filtrations for all $n \geq 0$.
\end{cor}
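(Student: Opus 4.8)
The plan is to read this off directly from Proposition \ref{p^n-filt}. The only observation needed is that every dual Weyl module already carries a $p^0$-filtration: since $X_0 = \{0\}$ and $M^{(0)} = M$ for any $G$-module $M$, a $p^0$-filtration is precisely a good filtration, and the trivial one-step filtration $0 \subset \nabla(\lambda)$ is a good filtration of $\nabla(\lambda)$ for each $\lambda \in X^+$.

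Given this base case, I would invoke Proposition \ref{p^n-filt} with $n = 0$: as $p \geq h(h-2)$, that proposition applies and upgrades the $p^0$-filtration of $\nabla(\lambda)$ to a $p^m$-filtration for every $m \geq 0$. Since $\lambda \in X^+$ is arbitrary, this is exactly the assertion of the corollary.

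There is no genuine obstacle at this stage; the entire weight of the argument already rests in the earlier results. Theorem \ref{main} (hence Corollary \ref{main cor}) furnishes the $p$-filtrations of dual Weyl modules under the hypothesis $p \geq h(h-2)$, and the inductive step in Proposition \ref{p^n-filt} — which twists a $p$-filtration of $\nabla(\lambda^1)$ by $F^n$ and then merges the restricted tensor factors $L(\mu^0) \otimes L(\mu)^{(n)}$ into $L(\mu^0 + p^n\mu)$ via Steinberg's tensor product theorem — propagates the property from level $n$ to level $n+1$. Corollary \ref{main cor-n} is simply the iterate of that step applied to $M = \nabla(\lambda)$, starting from the trivial good filtration.
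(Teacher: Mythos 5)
Your proof is correct and matches the paper's, which simply combines Corollary \ref{main cor} with Proposition \ref{p^n-filt}; starting from the trivial $p^0$-filtration (good filtration) of $\nabla(\lambda)$ and invoking Proposition \ref{p^n-filt} at $n=0$ is the same argument, since the inductive step of that proposition is itself powered by Corollary \ref{main cor}.
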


\begin{proof}
Immediate from Corollary \ref{main cor} and Proposition \ref{p^n-filt}.
\end{proof}

\subsection{Tensor products involving Steinberg modules}

Let $n \geq 0$. The $n$'th Steinberg module is 
$$ St_n = L((p^n -1)\rho).$$
Note that $St_1 = St$ in our notation in Section 2. By the Steinberg tensor product theorem we have $St_n = St_{n-1} \otimes St^{(n-1)} = St \otimes St_{n-1}^{(1)}$. By the strong linkage principle we have  $St_n = \nabla((p^n-1)\rho)$.

As $St_n$ is self dual we have
\begin{equation} \label{Steinberg summand}
St_n \text { is a $G$-summand of } St_n^{\otimes 3}.
\end{equation}

\begin{defn}
Let $M$ be a $G$-module. We say that $M$ is divisible by $St_n$ if there exists a $G$-module $N$ such that $M = St_n \otimes N$.
\end{defn}

Note that $St_n$ is divisible by $St_m$ for all $m \leq n$.
 
Let $M$ be a $G$-module. Then 

\begin{prop} \label{tensor2}  Suppose $M$ is divisible by $St_n$. Then $M$ has a good filtration iff $M\otimes St_n$ has a good filtration.
\end{prop}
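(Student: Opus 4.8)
The plan is to prove both implications via the standard fact that a module has a good filtration if and only if all higher $\nabla$-cohomology vanishes, i.e. $R^i\Ind_B^G(M) = 0$ for $i > 0$, equivalently $\Ext^i_G(\Delta(\mu), M) = 0$ for all $\mu \in X^+$ and $i > 0$; I will use that tensor products of modules with good filtrations have good filtrations (Donkin, Mathieu). The forward direction is immediate: if $M$ has a good filtration and $St_n = \nabla((p^n-1)\rho)$ has a good filtration (it is even simple as a costandard module), then $M \otimes St_n$ has a good filtration by the tensor product theorem for good filtrations. So the content is the reverse implication.

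For the reverse direction, suppose $M \otimes St_n$ has a good filtration and write $M = St_n \otimes N$ using divisibility. Then $M \otimes St_n = St_n \otimes (St_n \otimes N) = St_n^{\otimes 2} \otimes N$ has a good filtration. Tensoring once more with $St_n$, the module $St_n \otimes (St_n^{\otimes 2} \otimes N) = St_n^{\otimes 3} \otimes N$ has a good filtration (again by the tensor-product-of-good-filtrations theorem, since $St_n$ has a good filtration). Now I invoke (\ref{Steinberg summand}): $St_n$ is a direct $G$-summand of $St_n^{\otimes 3}$, hence $M = St_n \otimes N$ is a direct summand of $St_n^{\otimes 3} \otimes N$. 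A direct summand of a module with a good filtration again has a good filtration — this is the key structural input, and it follows from the cohomological criterion, since vanishing of $R^i\Ind_B^G$ passes to direct summands. Therefore $M$ has a good filtration.

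The main obstacle, such as it is, is simply making sure the two black-box facts are legitimately available: (a) the tensor product of two modules with good filtrations has a good filtration, and (b) a direct summand of a module with a good filtration has a good filtration. Both are classical — (a) is the Donkin–Mathieu theorem (valid in all characteristics for $G$ semisimple), and (b) is an easy consequence of the $\Ext$-vanishing characterization of good filtrations — so the proof is really just the three-line chain $M \mid St_n^{\otimes 3} \otimes N$ combined with closure of the good-filtration property under tensor products and summands. One should also note at the outset that $St_n$ itself has a good filtration, which the excerpt records via $St_n = \nabla((p^n-1)\rho)$; this is what licenses step (a) in both directions.

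Symbolically, the heart of the argument is the implication chain
\begin{equation}
M \otimes St_n \text{ has a good filtration} \;\Longrightarrow\; St_n^{\otimes 3} \otimes N \text{ has a good filtration} \;\Longrightarrow\; M \text{ has a good filtration},
\end{equation}
where the first step tensors twice more with $St_n$ and the second extracts $M$ as a direct summand using (\ref{Steinberg summand}).
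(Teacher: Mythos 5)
Your proof is correct and is essentially identical to the paper's: both directions use the Wang--Donkin--Mathieu tensor product theorem, and for the converse both tensor $M\otimes St_n$ once more with $St_n$ to get $St_n^{\otimes 3}\otimes N$ and then extract $M$ as a direct summand via \eqref{Steinberg summand}, invoking closure of good filtrations under direct summands. No substantive differences.
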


\begin{proof}
The forward implication holds always due to the Wang-Donkin-Mathieu theorem, \cite{Wa}, \cite{Do}, \cite{Ma}, on tensor products of dual Weyl modules. To check the converse write $M = St_n \otimes N$. Applying this theorem again we see that  if $M \otimes St_n$ has a good filtration  so does $M \otimes St_n^{\otimes 2} = N  \otimes St_n^{\otimes 3}$. But by (\refeq{Steinberg summand}) we see that  this latter module has $M$ as a summand and hence  $M$ has a good filtration.
\end{proof}

\begin{prop} \label{n to good}
Suppose $p \geq (h-2)h$ and let $M$ be a $G$-module which is divisible by $St_n$.  Then $M$ has a $p^n$-filtration iff $M$ has a good filtration.
\end{prop}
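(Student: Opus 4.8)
Here is a plan for proving Proposition \ref{n to good}.

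The implication ``good filtration $\Rightarrow$ $p^n$-filtration'' holds for every $M$ (divisible by $St_n$ or not): a good filtration is a $p^0$-filtration, so Proposition \ref{p^n-filt} applied with $0$ in place of $n$ shows that $M$ has a $p^m$-filtration for every $m\ge 0$, in particular a $p^n$-filtration. So the content is the converse, and my plan is to reduce it, via Proposition \ref{tensor2}, to a statement about good filtrations which can be settled by the Wang--Donkin--Mathieu theorem. Write $M = St_n\otimes N$ and assume $M$ has a $p^n$-filtration. Since $M\otimes St_n = St_n\otimes(N\otimes St_n)$ is again divisible by $St_n$, two successive applications of Proposition \ref{tensor2} show that $M$ has a good filtration iff $M\otimes St_n^{\otimes 2}$ has one; so it suffices to prove that $M\otimes St_n^{\otimes 2}$ has a good filtration.

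The reason for tensoring with \emph{two} copies of $St_n$ is that this lets both ``bad'' tensor factors of a typical $p^n$-quotient be absorbed at once. Tensoring the given $p^n$-filtration of $M$ with $St_n^{\otimes 2}$ yields a filtration of $M\otimes St_n^{\otimes 2}$ whose sections are
$$\big(L(\mu_j^0)\otimes St_n\big)\otimes\big(St_n\otimes\nabla(\mu_j^1)^{(n)}\big),\qquad \mu_j^0\in X_n,\ \mu_j^1\in X^+.$$
I would then establish two facts. First, $St_n\otimes\nabla(\mu)^{(n)}\cong\nabla\big((p^n-1)\rho+p^n\mu\big)$ for all $\mu\in X^+$: writing $\Ind_B^G=\Ind_{G_nB}^G\circ\hat Z_n$ with $\hat Z_n=\Ind_B^{G_nB}$ exact, one has $\hat Z_n\big((p^n-1)\rho+p^n\mu\big)=\hat Z_n\big((p^n-1)\rho\big)\otimes p^n\mu$, and $\hat Z_n\big((p^n-1)\rho\big)$ is the simple $G_nB$-module $St_n$ (the $n$-fold iterate of the identity $St=\hat Z_1((p-1)\rho)$ recalled in Section 2); applying $\Ind_{G_nB}^G$ and the tensor identity together with $\Ind_{G_nB}^G\circ(-)^{(n)}=(-)^{(n)}\circ\Ind_B^G$ gives the claim, so the second factor of each section is a dual Weyl module and has a good filtration. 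Second, $L(\nu)\otimes St_n$ has a good filtration (indeed is a tilting module) for every $\nu\in X_n$: by Steinberg's tensor product theorem write $\nu=\sum_{i=0}^{n-1}p^i\nu_i$ with $\nu_i\in X_1$, so that $L(\nu)\otimes St_n=\bigotimes_{i=0}^{n-1}\big(L(\nu_i)\otimes St\big)^{(i)}$, and each $L(\nu_i)\otimes St$ is a tilting module by the classical fact recorded in \cite{RAG}; since tilting modules are closed under tensor products and Frobenius twists, so is the product, whence the first factor of each section also has a good filtration.

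Granting these two facts, every section of the filtration of $M\otimes St_n^{\otimes 2}$ is a tensor product of two modules with good filtrations, hence has a good filtration by the Wang--Donkin--Mathieu theorem (\cite{Wa}, \cite{Do}, \cite{Ma}); therefore $M\otimes St_n^{\otimes 2}$ has a good filtration, and the reduction above completes the proof. The only genuinely external input is the classical statement that $L(\nu)\otimes St$ is tilting for $\nu\in X_1$; everything else is the induction‑in‑stages bookkeeping of Section 2. The one step I expect to require care is the identity $St_n\otimes\nabla(\mu)^{(n)}\cong\nabla((p^n-1)\rho+p^n\mu)$ — specifically, keeping the $p^n$-adic components straight so that $\hat Z_n\big((p^n-1)\rho\big)$ really is the $n$-th Steinberg module — but this is a routine iteration of the $n=1$ computation already in the paper.
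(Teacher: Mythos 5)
Your overall architecture (forward direction from Proposition \ref{p^n-filt}, converse by reducing via two applications of Proposition \ref{tensor2} to showing $M\otimes St_n^{\otimes 2}$ has a good filtration) is sound, and the identity $St_n\otimes\nabla(\mu)^{(n)}\cong\nabla((p^n-1)\rho+p^n\mu)$ is indeed routine. But the step that is supposed to carry all the weight --- that $L(\nu)\otimes St_n$ has a good filtration for every $\nu\in X_n$ --- is not established by your argument, for two reasons. First, the closure claim you invoke is false: tilting modules are \emph{not} closed under Frobenius twist. For $SL_2$ already $L(1)^{(1)}=L(p)$ is not tilting, and more generally $\nabla(\mu)^{(1)}$ need not have a good filtration --- this failure is precisely why $p$-filtrations differ from good filtrations in the first place. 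So even granting that each $L(\nu_i)\otimes St$ is tilting, you cannot conclude that $\bigotimes_i\bigl(L(\nu_i)\otimes St\bigr)^{(i)}$ is tilting or good-filtered without a genuine argument (one can be given, e.g.\ by induction on $n$ using that $L(\nu_0)\otimes St$ is a $G_1$-injective tilting module, hence a sum of $T(\sigma)$ with $\sigma\in(p-1)\rho+X^+$, each a summand of $St\otimes T(\sigma-(p-1)\rho)$, and that $St\otimes(-)^{(1)}$ takes good filtrations to good filtrations --- but none of this is in your sketch).

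Second, the input ``$L(\nu_i)\otimes St$ is tilting for $\nu_i\in X_1$'' is not a classical unconditional fact recorded in \cite{RAG}. It is a theorem requiring a lower bound on $p$: it is proved in \cite{A01} for $p\geq 2h-2$ (improved to $p\geq 2h-4$ in \cite{BNPS}), it is exactly the engine behind the known implication of Donkin's conjecture, and its validity for small $p$ is not known. Under the hypothesis $p\geq (h-2)h$ the bound $2h-2$ does hold except in the case $p=h=3$, which therefore needs separate treatment --- this is precisely the exceptional case the paper's own proof flags and disposes of by a direct $SL_3$ computation (or by quoting the $2h-4$ bound). In effect your two-Steinberg trick is an attempt to reprove, from scratch, the statement ``a $p^n$-filtration of $M$ forces $M\otimes St_n$ (here $M\otimes St_n^{\otimes 2}$) to have a good filtration'', which the paper simply quotes as Proposition 2.10 of \cite{A01}; as written, your version both misattributes the key ingredient and breaks down at the Frobenius-twist step, so the proof has a genuine gap.
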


\begin{proof}
If $M$ has a good filtration we get from Corollary \ref{main cor-n}, that $M$ also has a $p^n$-filtration. Conversely, if $M$ has a $p^n$-filtration then by \cite{A01} Proposition 2.10 we get that $M \otimes St_n$ has a good-filtration for $p \geq 2h-2$. Then Proposition \ref{tensor2} says that $M$ has a good filtration as well.

Note than in the above argument we need $p \geq 2h-2$. This is implied by our assumption $p \geq (h-2)h$ unless $ p = h = 3 $.  But then $G$ is $SL_3$ and the result is easily checked by a direct computation. Alternatively, see \cite{BNPS} where the bound $2h-2$ is improved to $2h-4$.
\end{proof}

\begin{rem} \label{sharper}

We could (with the same assumption on $p$) sharpen the proposition to the (seemingly) more general statement about a $G$-module which is divisible by $St_n$.
$$ \text{Let $m \leq n$. Then $M$ has a $p^m$-filtration iff $M$ has a good filtration}. $$
This follows from the fact that $St_m$ is a tensor factor in $St_n$ for all $m \leq n$. 

\end{rem}

\begin{cor} Suppose $p \geq (h-2)h$ and let $M$ be a $G$-module which is divisible by $St_n$. Then 
$M$ has a $p^n$-filtration iff $M \otimes St_n$ has a $p^n$-filtration.
\end{cor}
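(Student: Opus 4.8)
The plan is to combine the previous two results, Proposition~\ref{n to good} and Proposition~\ref{tensor2}, to reduce the statement to an equivalence of good-filtration conditions. First I would observe that if $M$ is divisible by $St_n$, then so is $M \otimes St_n$, since $M \otimes St_n = St_n \otimes (N \otimes St_n)$ whenever $M = St_n \otimes N$; thus both hypotheses of Proposition~\ref{n to good} are available for $M$ and for $M \otimes St_n$. Applying that proposition to each of the two modules gives: $M$ has a $p^n$-filtration iff $M$ has a good filtration, and $M \otimes St_n$ has a $p^n$-filtration iff $M \otimes St_n$ has a good filtration.

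It then remains to link the two good-filtration statements, and this is precisely the content of Proposition~\ref{tensor2}: under the hypothesis that $M$ is divisible by $St_n$, the module $M$ has a good filtration iff $M \otimes St_n$ has a good filtration. Chaining the three equivalences yields
\[
M \text{ has a } p^n\text{-filtration}
\iff M \text{ has a good filtration}
\iff M \otimes St_n \text{ has a good filtration}
\iff M \otimes St_n \text{ has a } p^n\text{-filtration},
\]
which is the claim. The assumption $p \geq (h-2)h$ is exactly what is needed to invoke Proposition~\ref{n to good} (and, through it, the bound $2h-2$ used there), so no new hypotheses on $p$ are required.

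There is essentially no obstacle here beyond bookkeeping: the one point that needs a word of care is checking that $M \otimes St_n$ genuinely falls under the scope of Proposition~\ref{n to good}, i.e. that divisibility by $St_n$ is preserved under tensoring with $St_n$ — which is immediate — so that one is entitled to apply that proposition a second time. Everything else is a formal concatenation of equivalences already established in the excerpt.
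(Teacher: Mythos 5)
Your proof is correct and is exactly the natural unpacking of the paper's one-line proof, which simply says to combine Propositions~\ref{tensor2} and~\ref{n to good}. The only detail you add beyond the paper's phrasing is the (immediate but worth noting) observation that $M\otimes St_n$ remains divisible by $St_n$, which is needed to apply Proposition~\ref{n to good} a second time.
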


\begin{proof}
Combine Propositions \ref{tensor2} and \ref{n to good}.
\end{proof}

\subsection{Relations to the Steinberg component}

Recall that in \cite{A18} we establish an equivalence between the category of rational $G$ -modules and the Steinberg component of this category. The $n$'th Steinberg component consists of all $G$-modules whose composition factors have the form $L(p^n \cdot \lambda)$ with $\lambda \in X^+$ (using the convention $p^n \cdot \lambda = p^n(\lambda + \rho) - \rho)$. The equivalence is the composite of twisting with the Frobenius $n$ times and tensoring with the $n$'th Steinberg module. We shall now see how this equivalence behaves with respect to $p$-filtrations and their higher analogues.

\begin{prop}\label {m,n} Let $M$ be a $G$ module and $m, n \in \Z_{\geq 0}$. Then $M$ has a $p^m$-filtration iff $M^{(n)} \otimes St_n$ has a $p^{(m+n)}$-filtration. 
\end{prop}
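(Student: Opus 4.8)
The plan is to reduce the biconditional to two separate implications and then to the case $M = L(\lambda^0)\otimes\nabla(\lambda^1)^{(m)}$, exploiting the Steinberg tensor product theorem and the behaviour of $\Ind_B^G$ under Frobenius twists, exactly as in the proof of Proposition~\ref{p^n-filt}. The key algebraic identity is that for $\nu\in X^+$ one has $(L(\mu)\otimes\nabla(\nu)^{(m)})^{(n)}\otimes St_n = L((p^n-1)\rho)\otimes L(\mu)^{(n)}\otimes \nabla(\nu)^{(m+n)} = L((p^n-1)\rho + p^n\mu)\otimes\nabla(\nu)^{(m+n)}$, where the last step is Steinberg's tensor product theorem and we use that $(p^n-1)\rho + p^n\mu \in X_n + p^nX^+$ with $(p^n-1)\rho + p^n\mu$ having restricted part $(p^n-1)\rho$ only when $\mu=0$; in general $(p^n-1)\rho+p^n\mu$ need not be $p^n$-restricted, but since $St$ is restricted and $L((p^n-1)\rho)$ is $p^n$-restricted, and $L(\mu)^{(n)}$ contributes the $p^n$-scaling, the product $L((p^n-1)\rho)\otimes L(\mu)^{(n)}$ is already a single simple module $L((p^n-1)\rho + p^n\mu)$ which lies in $X_n$ precisely when all coordinates stay below $p^n$ — which holds automatically here because $(p^n-1) + p^n\langle\mu,\alpha^\vee\rangle < p^n$ fails; so more carefully one writes $(p^n-1)\rho+p^n\mu = \sigma^0 + p^n\sigma^1$ and notes $L((p^n-1)\rho+p^n\mu) = L(\sigma^0)\otimes L(\sigma^1)^{(n)}$ by Steinberg again, giving a quotient of the required form $L(\sigma^0)\otimes\nabla(\text{something})^{(m+n)}$ only after absorbing $L(\sigma^1)^{(n)}$.

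Let me restructure this more cleanly. First I would prove the forward direction: suppose $M$ has a $p^m$-filtration with sections $L(\mu_j^0)\otimes\nabla(\mu_j^1)^{(m)}$, $\mu_j\in X^+$. Twisting by $F^n$ and tensoring with $St_n$ is exact, so $M^{(n)}\otimes St_n$ has a filtration with sections $St_n\otimes L(\mu_j^0)^{(n)}\otimes\nabla(\mu_j^1)^{(m+n)}$. By the Steinberg tensor product theorem $St_n\otimes L(\mu_j^0)^{(n)} = L((p^n-1)\rho + p^n\mu_j^0) =: L(\sigma_j)$ with $\sigma_j\in X^+$, and writing $\sigma_j = \sigma_j^0 + p^{m+n}\tau_j$ with $\sigma_j^0\in X_{m+n}$, $\tau_j\in X^+$ (Steinberg once more) we get $L(\sigma_j) = L(\sigma_j^0)\otimes L(\tau_j)^{(m+n)}$, so the section becomes $L(\sigma_j^0)\otimes\bigl(L(\tau_j)\otimes\nabla(\mu_j^1)\bigr)^{(m+n)}$. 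Now $L(\tau_j)\otimes\nabla(\mu_j^1)$ has a good filtration by the Wang–Donkin–Mathieu theorem (tensor product of a dual Weyl module with $\nabla(\tau_j)$ has a good filtration, and $L(\tau_j)$ is a submodule... — actually one needs $\nabla(\tau_j)\otimes\nabla(\mu_j^1)$, so I would instead replace $L(\tau_j)$ by $\nabla(\tau_j)$ at the cost of refining the filtration, or better: observe that $L(\tau_j)\otimes\nabla(\mu_j^1)$ need not have a good filtration, so the correct move is to \emph{not} split off $L(\tau_j)$ and instead directly invoke that $St_n\otimes L(\mu_j^0)^{(n)} = L(\sigma_j)$ is $p^{m+n}$-restricted-times-twist, i.e. decompose only as far as $L(\sigma_j^0)\otimes L(\tau_j)^{(m+n)}$ and then apply Corollary~\ref{main cor-n} / Proposition~\ref{p^n-filt} to $L(\tau_j)\otimes\nabla(\mu_j^1)^{(0)}$... ) — this is the subtle point and I address it below.

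The cleanest route: for the forward direction, use that $L(\tau_j)\otimes\nabla(\mu_j^1)$ has a $p$-filtration. This follows because $\nabla(\mu_j^1)$ has a $p$-filtration (Corollary~\ref{main cor}) with sections $L(a)\otimes\nabla(b)^{(1)}$, and tensoring with $L(\tau_j)$ gives sections $L(\tau_j)\otimes L(a)\otimes\nabla(b)^{(1)}$; but a tensor product of two restricted simples need not have a $p$-filtration either. So instead I would argue at the level of the whole module: $M^{(n)}\otimes St_n$ being a twist-times-Steinberg, apply Proposition~\ref{n to good}-style reasoning: it suffices to show $M^{(n)}\otimes St_n$ has a $p^{m+n}$-filtration, and since it is divisible by $St_n$, by Remark~\ref{sharper} this is equivalent to $M^{(n)}\otimes St_n$ having a good filtration (using $m+n\geq n$... wait, we need $p^{m+n}$ vs good; Remark~\ref{sharper} gives: divisible by $St_n$ implies $p^r$-filtration iff good filtration for $r\leq n$ — but here the exponent is $m+n > n$, so that does not apply). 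Given the time constraints of a plan, I will present the argument via the direct section-by-section computation and flag the reassembly of sections into valid $p^{m+n}$-filtration quotients as \textbf{the main obstacle}: one must verify that $St_n\otimes L(\mu_j^0)^{(n)}\otimes\nabla(\mu_j^1)^{(m+n)}$, which equals $L(\sigma_j^0)\otimes\bigl(L(\tau_j)\otimes\nabla(\mu_j^1)\bigr)^{(m+n)}$, \emph{does} admit a $p^{m+n}$-filtration — and this is exactly the content of applying Proposition~\ref{p^n-filt} to the $G$-module $L(\tau_j)\otimes\nabla(\mu_j^1)$, which has a $p^0$-filtration (good filtration) by Wang–Donkin–Mathieu applied to $\nabla(\tau_j)\otimes\nabla(\mu_j^1)$ after replacing $L(\tau_j)$ by a good filtration of $\nabla(\tau_j)$ (not identical, but one can refine $L(\tau_j)\hookrightarrow\nabla(\tau_j)$ and note the relevant sections still work since $\nabla(\tau_j)/L(\tau_j)$ contributes only lower sections). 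For the reverse direction, if $M^{(n)}\otimes St_n$ has a $p^{m+n}$-filtration, tensor once more with $St_n$: then $M^{(n)}\otimes St_n^{\otimes 2} = N^{(n)}\otimes St_n^{\otimes 3}$ if $M^{(n)}\otimes St_n = N^{(n)}\otimes St_n$, and using that $St_n$ is a $G$-summand of $St_n^{\otimes 3}$ together with exactness of $-\otimes St_n$ and the fact that twisting is fully faithful, recover that $M^{(n)}$ hence $M$ inherits the filtration; more directly, untwisting the sections $L(\rho_j^0)\otimes\nabla(\rho_j^1)^{(m+n)}$ and using $\mathrm{ht}$-type weight bounds from Lemma~\ref{weight estimates} to see each section, after removing the $St_n$ factor, is of the form $(\text{something})^{(n)}$ with a $p^m$-filtrable preimage. \textbf{The hard part} throughout is bookkeeping the interaction of the three operations (twist by $F^n$, tensor with $St_n$, $p$-adic decomposition of highest weights) so that restricted parts and twisted parts line up — i.e., repeated careful application of Steinberg's tensor product theorem to guarantee every section is literally of the form $L(\text{restricted})\otimes\nabla(\text{dominant})^{(m+n)}$.
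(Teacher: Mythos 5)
There is a genuine gap, and it is telling that you yourself flag as ``the main obstacle'' a difficulty that simply is not there. In a $p^m$-filtration the sections are $L(\mu_j^0)\otimes\nabla(\mu_j^1)^{(m)}$ with $\mu_j^0\in X_m$, i.e.\ $\langle\mu_j^0,\alpha^\vee\rangle\leq p^m-1$ for all $\alpha\in S$. After twisting by $F^n$ and tensoring with $St_n$ you correctly obtain the section $L(\sigma_j)\otimes\nabla(\mu_j^1)^{(m+n)}$ with $\sigma_j=(p^n-1)\rho+p^n\mu_j^0$, but you then never observe that $\sigma_j$ is \emph{automatically} $p^{m+n}$-restricted: for each $\alpha\in S$,
$\langle\sigma_j,\alpha^\vee\rangle = p^n-1+p^n\langle\mu_j^0,\alpha^\vee\rangle\leq p^n-1+p^n(p^m-1)=p^{m+n}-1<p^{m+n}$.
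So in your notation $\tau_j=0$, the section is already literally in the required form, and the whole subsequent agonizing --- the attempted application of Wang--Donkin--Mathieu to $L(\tau_j)\otimes\nabla(\mu_j^1)$, the replacement of $L(\tau_j)$ by $\nabla(\tau_j)$, the appeal to Proposition~\ref{p^n-filt}, all of which you yourself rightly notice do not quite work --- is chasing a problem that was never there. (You repeatedly test $\sigma_j$ against $X_n$ rather than $X_{m+n}$, which is where the confusion enters.) This also removes the hidden dependence on the bound $p\geq(h-2)h$ that your route would have introduced into a statement that in the paper holds unconditionally.

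The reverse direction is not adequately addressed either. Tensoring once more with $St_n$ and invoking the summand $St_n\mid St_n^{\otimes 3}$ can recover $M^{(n)}\otimes St_n$ as a summand of $M^{(n)}\otimes St_n^{\otimes 3}$, but it gives no mechanism to pass from a $p^{m+n}$-filtration of $M^{(n)}\otimes St_n$ to a $p^m$-filtration of $M$ itself; and ``untwisting sections via weight estimates'' is not a proof. The tool you are missing is the exact functor $\Hom_{G_n}(St_n,-)^{(-n)}$, the quasi-inverse of the equivalence with the $n$'th Steinberg component from \cite{A18}. Applied to a section $L(\rho_j^0)\otimes\nabla(\rho_j^1)^{(m+n)}$ with $\rho_j^0\in X_{m+n}$, after writing $\rho_j^0=a_j+p^nb_j$ with $a_j\in X_n$, $b_j\in X_m$ and using Steinberg once more, it yields $\Hom_{G_n}(St_n,L(a_j))\otimes L(b_j)\otimes\nabla(\rho_j^1)^{(m)}$, which is $L(b_j)\otimes\nabla(\rho_j^1)^{(m)}$ if $a_j=(p^n-1)\rho$ and $0$ otherwise --- exactly a $p^m$-filtration of $M$. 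Without this functor your reverse direction does not close.
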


\begin{proof}
Let $\lambda \in X^+$. In analogy with Section 3.1 we  write $\lambda = \lambda^0 + p^m \lambda^1$ with $\lambda \in X_m$. Then we have $(L(\lambda^0) \otimes \nabla(\lambda^1)^{(m)})^{(n)}  \otimes St_n = St_n\otimes  L(\lambda^0)^{(n)} \otimes \nabla(\lambda^1)^{(n+m)} = L((p^n-1)\rho + p^n \lambda^0) \otimes \nabla(\lambda^1)^{(n+m)}$ where we have used once more the Steinberg tensor product theorem. This proves the only if statement. 

To check the converse let $N$ be a $G$-module belonging to the $n$'th Steinberg component. Then $N = St_n \otimes M^{(n)}$ with $M = \Hom_{G_n} (St_n, N)^{(-n)}$, see \cite{A18}. Applying the exact functor $\Hom_{G_n}(St_n, -)^{(-n)}$ to a $p^{(m+n)}$-filtration of $N$ will give us the desired $p^n$-filtration of $M$. In fact, the value of this functor on a module like  $St_n \otimes (L(\mu)^{(n)} \otimes  \nabla(\nu)^{(m+n)}$, where $\mu \in X_m$ and $\nu \in X^+$, is $ 
L(\mu) \otimes \nabla(\nu)^{(m)}$.

\end{proof}

\section{Donkin's conjecture on $p$-filtrations}

In 1990 S. Donkin proposed the following conjecture. We abbreviate it $DC_1$.

Let $M$ be a $G$-module. Then 
\begin{equation} \label{DC_1}
  \text {$ M$ has a $p$-filtration iff $M \otimes St$ has a good filtration.}
\end{equation}
There is an obvious higher version of $DC_1$ which we name $DC_n$, namely
\begin{equation} 
 \text {$M$ has a $p^n$-filtration iff $M \otimes St_n$ has a good filtration.}
\end{equation}

In this section we shall make some remarks on these conjectures. In particular, we shall relate them to the results in the previous sections. 

\subsection{What is known, partially known, or unknown about $DC_n$}
\begin{rem}
\begin{enumerate}
\item The implication from left to right $DC_1$ was proved in \cite{A01} for $p \geq 2h-2$. See also \cite{KN} for an alternative proof and \cite{BNPS} for a lowering of the bound on $p$ to $p \geq 2h-4$. The converse implication is only known to hold for $SL_2$, see \cite{A01} Proposition 4.3. The conjecture is also open in both directions when $p$ is small.

\item As we have observed before $\nabla(\lambda) \otimes St$ has a good filtration for all $\lambda$ because of the Wang-Donkin-Mathieu theorem, cf. \cite{Wa}, \cite{Do}, \cite{Ma}. Hence if the right to left implication in $DC_1$ is proved, our main result Theorem \ref{intro} would be a consequence (for all $p$). We consider our result as partial evidence for the conjecture.

\item It is rather easy to see that if the left to right implication holds in $DC_1$ then the same implication holds in $DC_n$, see Proposition 2.10 in \cite{A01}.  In contrast this author knows of no ways to reduce the reverse implications to the case $n=1$.
\end{enumerate}
\end{rem}

\begin{prop} If $DC_{n+1}$ holds then so does $DC_n$.
\end{prop}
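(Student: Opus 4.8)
The plan is to deduce $DC_n$ from $DC_{n+1}$ by a single Frobenius twist: twisting shifts a $p^n$-filtration to a $p^{n+1}$-filtration, while on the good-filtration side passing from $St_n$ to $St_{n+1}$ differs exactly by that twist together with one extra copy of $St$. So first I would record the elementary observation that for any $G$-module $V$,
\[
V \text{ has a } p^n\text{-filtration} \iff V^{(1)} \text{ has a } p^{n+1}\text{-filtration}.
\]
Indeed, by the Steinberg tensor product theorem the twist of a section $L(\mu^0)\otimes\nabla(\mu^1)^{(n)}$ with $\mu^0\in X_n$ is $L(p\mu^0)\otimes\nabla(\mu^1)^{(n+1)}$ with $p\mu^0\in X_{n+1}$, giving ``$\Rightarrow$''. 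For ``$\Leftarrow$'' note that every composition factor of $V^{(1)}$ has highest weight in $pX^+$; since $L(\tau^0)$ is a composition factor of the section $L(\tau^0)\otimes\nabla(\tau^1)^{(n+1)}$, this forces $\tau^0=p\sigma$ with $\sigma\in X_n$, so that section equals $(L(\sigma)\otimes\nabla(\tau^1)^{(n)})^{(1)}$; as the $G$-submodule lattice of $V^{(1)}$ coincides with that of $V$, a $p^{n+1}$-filtration of $V^{(1)}$ is the twist of a $p^n$-filtration of $V$. (This is essentially the mechanism underlying Proposition \ref{m,n}.)

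Applying this with $V=M$, then $DC_{n+1}$ to the module $M^{(1)}$, and finally the Steinberg tensor product theorem in the form $St_{n+1}=St\otimes St_n^{(1)}$, I get
\begin{align*}
M \text{ has a } p^n\text{-filtration}
&\iff M^{(1)} \text{ has a } p^{n+1}\text{-filtration}\\
&\iff M^{(1)}\otimes St_{n+1}\text{ has a good filtration}\\
&\iff (M\otimes St_n)^{(1)}\otimes St \text{ has a good filtration}.
\end{align*}
Hence $DC_n$ for $M$ will follow once I show, for $V:=M\otimes St_n$, that $V^{(1)}\otimes St$ has a good filtration if and only if $V$ does.

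The hard part is this last equivalence. The ``only if'' direction is routine: if $V$ has a good filtration then so does $V^{(1)}$ (since $\nabla(\lambda)^{(1)}=\nabla(p\lambda)$), and then so does $V^{(1)}\otimes St$ by the Wang--Donkin--Mathieu theorem \cite{Wa},\cite{Do},\cite{Ma} (or simply because $St$ is tilting). For the ``if'' direction I would argue that $V^{(1)}\otimes St$ lies in the first Steinberg component, so the sections $\nabla(\mu)$ of any good filtration of it have $L(\mu)$ in that component, which forces $\mu=(p-1)\rho+p\sigma$ and hence $\nabla(\mu)=St\otimes\nabla(\sigma)^{(1)}$ (this identity follows from $\hat Z_1((p-1)\rho+p\sigma)=\hat Z_1((p-1)\rho)\otimes p\sigma$ and the tensor identity, cf. the discussion of $St$ in Section~2); then applying the exact functor $\Hom_{G_1}(St,-)$, which sends $St\otimes\nabla(\sigma)^{(1)}$ to $\nabla(\sigma)^{(1)}$ and $V^{(1)}\otimes St$ to $V^{(1)}$ (using that $St$ is $G_1$-projective with $\Hom_{G_1}(St,St)=k$), produces a good filtration of $V^{(1)}$, hence of $V$. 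This ``good filtration after a Frobenius twist and tensoring with $St$'' statement is standard and is also a formal consequence of the Steinberg component equivalence of \cite{A18}; it is the one step I would write out in detail. Chaining the displayed equivalences then gives $DC_n$, the degenerate case $n=0$ being trivially true since $St_0=L(0)$ and a $p^0$-filtration is a good filtration.
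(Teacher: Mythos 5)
Your argument is correct and follows essentially the same route as the paper: twist by Frobenius to convert the $p^n$-filtration question into a $p^{n+1}$-filtration question, apply $DC_{n+1}$ to $M^{(1)}$, rewrite $M^{(1)}\otimes St_{n+1}$ as $(M\otimes St_n)^{(1)}\otimes St$, and then remove the twist-and-tensor-with-$St$ on the good-filtration side. The only difference is that for this last equivalence the paper simply cites \cite{A18}, Corollary 3.2(3), whereas you supply the (correct) Steinberg-component argument via $\Hom_{G_1}(St,-)$ in full.
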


\begin{proof}
Assume $DC_{n+1}$ and let $M$ be a $G$-module. Observe first that $M$ has a $p^n$-filtration iff $M^{(1)}$ has a $p^{n+1}$-filtration. By $DC_{n+1}$ this is the case iff $M^{(1)} \otimes St_{n+1} = (M \otimes St_n)^{(1)} \otimes St_1$ has a good filtration, i.e. iff $M \otimes St_n$ has a good filtration. Here the last step follows from \cite{A18} Corollary 3.2 (3)  applied to $M \otimes St_n$.
\end{proof}

The Propositions \ref{tensor2} and \ref{n to good} together with Remark \ref{sharper} give us a big family of modules for which $DC_n$ is true.

\begin{prop} Suppose $p \geq (h-2)h$. If $N$ is a $G$-module which is divisible by $St_n$ then $DC_n$ holds for $N$. In particular, $DC_n$ holds (as do in fact $DC_m$ for all $m \leq n$) for all modules belonging to the $n$'th Steinberg component.
 \end{prop}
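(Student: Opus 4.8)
The plan is to deduce this statement directly from the earlier propositions, so almost no new work is required. First I would reduce to the case where $N$ is divisible by $St_n$ exactly (not some larger $St_m$), since divisibility by $St_n$ is the hypothesis; the "in particular" clause about the $n$'th Steinberg component then follows because any $N$ in that component is of the form $St_n \otimes M^{(n)}$ by \cite{A18}, hence visibly divisible by $St_n$, and similarly divisible by $St_m$ for every $m \le n$ since $St_m$ is a tensor factor of $St_n$.

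Next, for the main assertion I would unwind what $DC_n$ says for the module $N$: it asserts that $N$ has a $p^n$-filtration iff $N \otimes St_n$ has a good filtration. By Proposition \ref{n to good} (whose hypothesis $p \ge (h-2)h$ is exactly ours, and whose hypothesis that $N$ is divisible by $St_n$ is exactly ours), $N$ has a $p^n$-filtration iff $N$ has a good filtration. So it remains only to see that $N$ has a good filtration iff $N \otimes St_n$ has a good filtration. But $N \otimes St_n$ is again divisible by $St_n$ (it equals $St_n \otimes (N \otimes St_n)$ up to rearranging, or more simply $N = St_n \otimes N'$ forces $N \otimes St_n = St_n \otimes (N' \otimes St_n)$), so Proposition \ref{tensor2} applies and gives exactly this equivalence. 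Chaining the two equivalences yields $DC_n$ for $N$.

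For the parenthetical strengthening that $DC_m$ holds for $N$ for all $m \le n$, I would invoke Remark \ref{sharper}: under $p \ge (h-2)h$, a module divisible by $St_n$ has a $p^m$-filtration iff it has a good filtration, for every $m \le n$. Combining this with Proposition \ref{tensor2} applied to $St_m$ (note $N$ and $N \otimes St_m$ are both divisible by $St_m$, since $St_m$ is a tensor factor of $St_n$), one gets that $N$ has a $p^m$-filtration iff $N \otimes St_m$ has a good filtration, which is $DC_m$ for $N$.

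I do not expect any genuine obstacle here; the content is entirely in the earlier results, and the only thing to be careful about is the bookkeeping of divisibility — specifically checking that each module to which I apply Proposition \ref{tensor2} or Remark \ref{sharper} is divisible by the relevant Steinberg module, and that the bound $p \ge (h-2)h$ (which already subsumes the $p \ge 2h-2$ needed inside Proposition \ref{n to good}, except in the single case $p = h = 3$ treated separately there) is in force throughout. If a subtlety arises it will be only in that last small-prime edge case, which is handled exactly as in the proof of Proposition \ref{n to good}.
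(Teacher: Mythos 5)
Your proposal is correct and follows essentially the same route as the paper, which gives no separate argument but simply asserts that the proposition follows by combining Proposition \ref{tensor2}, Proposition \ref{n to good} and Remark \ref{sharper} — precisely the chain of equivalences you spell out, including the reduction of the Steinberg-component case to divisibility by $St_n$ via \cite{A18} and the use of $St_m$ being a tensor factor of $St_n$ for the $DC_m$, $m\leq n$, refinement. One cosmetic point: in your middle step the instance of Proposition \ref{tensor2} you need is the one with $M=N$, whose hypothesis is the given divisibility of $N$ by $St_n$; verifying instead that $N\otimes St_n$ is divisible by $St_n$ is unnecessary (and that instance would yield a different equivalence), but since $N$'s divisibility is the standing hypothesis this does not affect the argument.
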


Proposition \ref{n to good} also allow us to reformulate the Donkin conjectures.

\begin{prop}
When $p \geq (h-2)h$, $DC_n$ is equivalent to the following statement: Let $M$ be a $G$-module. Then 
$$ M \text { has a $p^n$-filtration iff $M \otimes St_n$ has one. }$$
\end{prop}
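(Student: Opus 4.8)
The statement to prove is an equivalence of two biconditionals, so I would show that, granting $p \geq (h-2)h$, the condition ``$M \otimes St_n$ has a good filtration'' is equivalent to ``$M \otimes St_n$ has a $p^n$-filtration''. Once this is established, $DC_n$ (``$M$ has a $p^n$-filtration iff $M \otimes St_n$ has a good filtration'') becomes verbatim the reformulated statement (``$M$ has a $p^n$-filtration iff $M \otimes St_n$ has a $p^n$-filtration''). So the whole content is the claim: \emph{for any $G$-module $N$ which is divisible by $St_n$, $N$ has a good filtration iff $N$ has a $p^n$-filtration}. But this is exactly Proposition \ref{n to good} applied to $N = M \otimes St_n$, which is indeed divisible by $St_n$.

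\medskip

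Concretely I would argue as follows. Fix a $G$-module $M$ and set $N = M \otimes St_n$; then $N$ is divisible by $St_n$. By Proposition \ref{n to good} (valid since $p \geq (h-2)h$), $N$ has a $p^n$-filtration if and only if $N$ has a good filtration. Therefore the two statements
\begin{center}
``$M$ has a $p^n$-filtration $\Longleftrightarrow$ $M \otimes St_n$ has a good filtration'' \quad and \quad ``$M$ has a $p^n$-filtration $\Longleftrightarrow$ $M \otimes St_n$ has a $p^n$-filtration''
\end{center}
have logically equivalent right-hand sides (with the same left-hand side), hence are equivalent. The first is $DC_n$ by definition, so $DC_n$ holds iff the reformulated statement holds.

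\medskip

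\textbf{Main obstacle.} There is essentially no obstacle here: the proposition is a formal consequence of Proposition \ref{n to good}. The only point requiring a word of care is that Proposition \ref{n to good} quantifies over modules divisible by $St_n$, and one must check that $M \otimes St_n$ always qualifies — which is immediate from the definition of divisibility with $N = M$. (If one wanted to be scrupulous about the edge case $p = h = 3$, one would note that Proposition \ref{n to good} already absorbs it, so nothing extra is needed.) Thus the proof is a two-line citation of Proposition \ref{n to good}, and the interest of the statement is conceptual — it recasts the Donkin conjecture entirely inside the world of $p^n$-filtrations, with no reference to good filtrations — rather than technical.
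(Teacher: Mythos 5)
Your argument is correct and matches the paper's intent exactly: the paper gives no explicit proof but introduces the proposition with ``Proposition \ref{n to good} also allow us to reformulate the Donkin conjectures,'' i.e.\ precisely the observation that applying Proposition \ref{n to good} to $N = M \otimes St_n$ (which is trivially divisible by $St_n$) makes the right-hand sides of $DC_n$ and the reformulation logically interchangeable. Nothing to add.
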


\subsection{$\Ext$-criteria}

Recall that if $\lambda \in X^+$ then there is a unique indecomposable tilting module $T(\lambda)$ with highest weight $\lambda$. In Theorem 2.4 in \cite{A01} we proved the following criteria for $M \otimes St_n$ to have a good filtration (actually this theorem was only proved for $n=1$ in {\it loc.cit.} but the argument for arbitrary $n$ is the same)
$$ \text { $ M \otimes St_n$ has a good filtration iff $\Ext^j_G(T(\lambda), M) = 0$ for all $j > 0$ and all $\lambda \in (p^n-1)\rho + X^+$.}$$

Note that the sets $(p^n-1)\rho + X^+$ decrease (strictly) with $n$. This means that if $M \otimes St_n$ has a good filtration 
for some $n$ then $M \otimes St_m$ has a good filtration also for all $m \geq n$. This is consistent with Proposition \ref{p^n-filt}.

\section{Appendix}

In \cite{A01} Corollary 3.7  I claimed to prove that all dual Weyl modules have $p$-filtrations for $p \geq 2h-2$. However, S. Donkin has pointed out to me that there is a problem with my proof of a lemma which is crucial for this result. I therefore withdrew my claim in a lecture at MSRI in 2008, see \cite{A08}. In this appendix I use the opportunity to record this retraction in writing \footnote{The counterexamples in \cite{BNPS2} shows that the lemma is in fact false (at least for $p=2$)} and to point out exactly where the problem is.

I'm grateful to S. Donkin for pointing out the gap in my proof, and to P. Sobaje for helpful comments. 

\vskip .5 cm
Lemma 3.3 in {\it loc. cit.} states that if a $G$-module $M$ has a submodule $M' \subset M$ such that both $M$ and $M'$ have $p$-filtrations then so does the quotient $M/M'$. Note that this lemma is true whenever Donkin's conjecture from Section 4 is true. Likewise the lemma also holds if there is an $\Ext$-vanishing criteria for a module to have a $p$-filtration.

However, my proof of Lemma 3.3 in \cite{A01} contains a gap: We claim (without proof) that if a $G$-module $M$ has two submodules $ M_1 = L(\lambda) \otimes \nabla(\mu)^{(1)}$ and $M'_1 = L(\lambda) \otimes \nabla(\nu)^{(1)}$ with $\lambda \in X_1$ and $\mu, \nu \in X^+$  then either $M_1 = M'_1$ or $M_1 \cap M_1' = 0$. This is not true: Take e.g. $G = SL_2$, $M = \nabla(p)^{(1)} \oplus \nabla(p-2)^{(1)}, M_1 = \nabla(p)^{(1)}$ and $M_1' = \{(x,f(x)) | x \in M_1 \}$ with $f: \nabla(p)^{(1)} \to \nabla(p-2)^{(1)}$ being non-zero. In this situation $M_1 \cap M'_1 = L(p)^{(1)}$. 

In addition to Lemma 3.3 I also withdraw Theorem 3.6 and Corollary 3.7, which both rely in an essential way on this lemma. Fortunately, all remaining results in the paper are independent of these results. 

The present paper contains a completely different proof of the results in Theorem 3.6 and Corollary 3.7 (under a stronger assumption on $p$). 

\vskip 1 cm

\vskip 1 cm

\end{document}